\renewcommand*\env@matrix[1][*\c@MaxMatrixCols c]{%
  \hskip -\arraycolsep
  \let\@ifnextchar\new@ifnextchar
  \array{#1}}
\DeclareMathOperator{\rank}{rank}
\newcommand{\erank}{\rank_\epsilon}
\DeclareMathOperator{\diag}{diag}
\newcolumntype{"}{@{\hskip\tabcolsep\vrule width 1pt\hskip\tabcolsep}}
\renewcommand{\arc}{\mathcal{A}}
\newcommand{\cluster}{\mathscr{I}}
\definecolor{myblue}{rgb}{0.36, 0.54, 0.66}
\newcommand{\tree}{\mathcal{T}}
\newcommand{\complex}{\mathbb{C}}
\newcommand{\twobytwo}[4]{\begin{bmatrix} #1 & #2 \\ #3 & #4 \end{bmatrix}}
\newcommand{\twobyone}[2]{\begin{bmatrix} #1 \\ #2 \end{bmatrix}}
\newcommand{\onebytwo}[2]{\begin{bmatrix} #1 & #2 \end{bmatrix}}
\newcommand{\order}{\mathcal{O}}
\DeclareMathOperator*{\argmin}{argmin}
\newcommand{\norm}[1]{\left\|#1\right\|}
\newcommand{\boxedtext}[1]{\begin{tcolorbox}[colback=white,colframe=black,width=\columnwidth,boxsep=5pt,arc=4pt]
  #1
\end{tcolorbox}}
\newcommand{\opentriangle}{%
  \raisebox{0.2pt}{\makebox[0.77778em]{%
    \setlength{\unitlength}{0.6em}%
    \linethickness{0.4pt}%
    \begin{picture}(1,1)
    \polygon(0,0)(1,0)(1,1)
    \end{picture}%
  }}%
}
\newcommand{\matrixsize}[1]{\textcolor{gray}{#1}}
\newcommand{\be}{\begin{equation}}
\newcommand{\ee}{\end{equation}}
\newcommand{\R}{\mathbb{R}}
\newcommand{\bigO}{{\mathcal{O}}}
\DeclareMathOperator{\sinc}{sinc}
\theoremstyle{remark}
\newtheorem{rmk}{Remark}
\title{Superfast direct inversion of the nonuniform discrete Fourier transform via hierarchically semi-separable least squares \thanks{Funding: This work is partly supported by the National Science Foundation grants DMS-2103317, DMS-2410045, DMS-2103317 and FRG award 1952777, the Carver Mead New Horizons Fund, and the U.S. Department of Energy, Office of Science, Office of Advanced Scientific Computing Research, Department of Energy Computational Science Graduate Fellowship under award number DE-SC0021110.} }
\author{
  Heather Wilber\thanks{Department of Applied Mathematics, University of Washington, Seattle, WA, 98105, United States (\email{hdw27@uw.edu}).}
\and
Ethan N. Epperly\thanks{Division of Computing and Mathematical Sciences, California Institute of Technology, Pasadena, CA, 91125, United States (\email{eepperly@caltech.edu}).}
\and
Alex H. Barnett\thanks{Center for Computational Mathematics, Flatiron Institute, New York, NY, 10010, United States (\email{abarnett@flatironinstitute.org}).}
}
\begin{document}
\maketitle

\begin{abstract} A direct solver is introduced for solving overdetermined linear systems involving nonuniform discrete Fourier transform matrices. Such matrices can be transformed into a Cauchy-like form that has hierarchical low rank structure. The rank structure of this matrix is explained, and it is shown that the ranks of the relevant submatrices grow only logarithmically with the number of columns of the matrix. A fast rank-structured hierarchical approximation method based on this analysis is developed, along with a hierarchical least-squares solver for these and related systems. This result is a direct method for inverting nonuniform discrete transforms with a complexity that is usually nearly linear with respect to the degrees of freedom in the problem.
  This solver is benchmarked against various iterative and direct solvers
  in the setting of inverting the one-dimensional type-II (or forward) transform,
  for a range of condition numbers and problem sizes (up to \smash{$4\times 10^6$ by $2\times 10^6$}). 
These experiments demonstrate that this method is especially useful for large  problems
  with multiple right-hand sides.

\end{abstract}

\begin{keywords}nonuniform discrete Fourier transform, 
Vandermonde, rank structured least-squares, displacement structure, hierarchical matrices, rectangular linear systems
\end{keywords}

\begin{AMS}
65T50, 65F55, 65F20
\end{AMS} 

\section{Introduction} The nonuniform discrete Fourier transform (NUDFT) is a fundamental task in computational mathematics. In one dimension, the forward problem is to evaluate
\begin{equation} \label{eq:Vandermonde_sys}
 b_j = \sum_{k = 1}^{n} e^{-2 \pi i p_j w_k} x_k, \qquad 1 \leq j \leq m,
\end{equation}
where the  \textit{sample locations} \hbox{$0 \le p_m < \cdots < p_{1} < 1$}, the \emph{frequencies} \hbox{$0 \!\le\! w_1 \!<\! \cdots \!<\! w_{n} \!<\! n$}, and the \textit{coefficients} \hbox{$\{x_1,\ldots, x_n\}$} are provided. In this paper we will focus on the so-called 
\textit{type-II transform}~\cite{dutt1993fast} for which the frequencies $w_k = k\!-\!1$ are equispaced.
The forward problem \cref{eq:Vandermonde_sys}
then corresponds to evaluating a given $1$-periodic Fourier series at
real targets \smash{$\{p_j\}_{j=1}^m$},
which naively involves $\mathcal{O}(mn)$ operations.
There exist well-established algorithms based upon the fast Fourier transform (FFT) \cite{dutt1993fast,barnett2019parallel,kunis2008time,potts2003fast,greengard2004accelerating,ruiz2018nonuniform}
that compute the type-II transform approximately at a reduced complexity $\mathcal{O}(n\log n + m \log(1/\epsilon))$, where $\epsilon$ is an accuracy parameter.
Mature software implementations of these fast transforms are available such as NFFT\footnote{Available at \url{https://github.com/NFFT/nfft}} \cite{keiner2009using} and FINUFFT\footnote{Available at \url{https://finufft.readthedocs.io}} \cite{barnett2019parallel}.
In this article, we focus on the \emph{inverse} problem:
\boxedtext{\textbf{Inverse type-II NUDFT problem.} Given measurements \hbox{$b = (b_1,\ldots,b_m)^T \in \complex^m$} and locations $\{p_j\}_{j=1}^m$,
  determine the coefficients
  $x = (x_1,\ldots,x_n)^T \in \complex^n$. } 
Unlike for the plain discrete Fourier transform (DFT), inversion of the type-II
is not simply an application of its adjoint (type-I) transform; rather, a large
dense $m\times n$ linear
system must be solved.
This task, and its higher-dimensional generalizations,
is important in diverse applications including
geophysics \cite{sacchi1996estimation,zwartjes2007fourier},
astrophysics \cite{swan1982discrete},
signal and image processing \cite{feichtinger95,bagchi2001nonuniform,viswanathan10},
and other areas of computational mathematics
\cite{martin2013computing, pan2001structured}.
In particular, type-II inverse NUDFT solvers can be used to reconstruct an image from irregularly
sampled Fourier data, as occurs in
non-Cartesian magnetic resonance imaging (MRI)
\cite{nattererbook,fessler2005toeplitz,greengard2006fast,kircheis2023direct}
and synthetic aperture radar (SAR) \cite{greengelb23}.

In this article, we develop a new direct solver for the type-II inverse NUDFT problem in the overdetermined setting. The complexity of our method is \smash{$\mathcal{O}((m+n)\log^2 n \log^2 (1/\epsilon))$} in most cases.
Our strategies are related to ideas that have been proposed for solving Toeplitz linear systems~\cite{martinsson2005fast, xia2012superfast,xi2014superfast}, but they appear to be new in this context. The method we develop leverages displacement structure~\cite{kailath1995displacement, koev1999matrices}, low rank approximation, and hierarchical numerical linear algebra~\cite{benzi2016matrices,martinsson2019fast}. It is markedly different from other methods for the inverse NUDFT problem such as iterative solvers \cite{feichtinger95,fessler2005toeplitz,ruiz2018nonuniform,greengard2006fast}
and recent direct methods combining sparse weights with the adjoint transform \cite{kircheis2019direct,kircheis2023direct}.
Our solver is general-purpose and fully adaptive, requiring no tuning parameters from the user other than a prescribed error tolerance. Unlike all of the above-mentioned methods, our approach is highly robust to sampling locations and can be applied in problems where sample locations are clustered or irregular. It is also particularly fast for problems that involve multiple right-hand sides. 
\subsection{Problem formulation}
\label{sec:problem-formulation}

This paper focuses on the inverse type-II NUDFT problem in the one-dimensional setting \cref{eq:Vandermonde_sys}, where the system is overdetermined (i.e., the number of measurements $m$ equals or exceeds the number of unknowns $n$). The extension to type-I NUDFT inversion
is straightforward, and we briefly discuss other extensions in \cref{sec:extensions}.  
Define the NUDFT matrix \smash{$V \in \complex^{m\times n}$} to have entries
\begin{equation*}
  V_{jk} = \gamma_j^{k-1}, \quad \text{where} \quad \gamma_j = e^{-2\pi i p_{j}}, \quad \text{for } j =1,\ldots,m, \, k = 1,\ldots,n.
\end{equation*}
The matrix $V$ is a \emph{Vandermonde matrix}, and $\gamma_1,\ldots,\gamma_m$
are referred to as the \emph{nodes} of $V$.
We assume the sample locations are distinct and ordered so that \smash{$0 \le p_m < \cdots < p_1 <1$}. This ensures that the nodes are unique and arranged counterclockwise around the unit circle. One can permute the rows of $V$ to order the sample locations in this way. 
The inverse type-II NUDFT problem consists of solving $Vx = b$ in the least-squares sense:
\begin{equation} \label{eq:least_squares}
  x = \argmin_{x \in \complex^n} \norm{Vx - b}_2^2.
\end{equation}
We include the noisy-data case where $b$ is not in the range of $V$,
important in applications where correct averaging over an excess of data
reduces the noise in the solution $x$. 
Since $\gamma_j$ are distinct, $V$ has full rank. While we allow that $V$ may be ill-conditioned, we assume that $V$ is numerically full-rank, in the sense that the condition number
\begin{equation}
  \label{eq:condition}
  \kappa_2(V) = \frac{\sigma_{\rm max}(V)}{\sigma_{\rm min}(V)}
\end{equation}
is much smaller than the inverse machine precision (i.e., $\kappa_2(V) \le 10^{14}$ for nearly compatible problems in double precision arithmetic).
Here, $\sigma_{\rm max}(V)$ and $\sigma_{\rm min}(V)$ denote the largest and smallest singular values of $V$.

\begin{rmk}\label{r:conventions}   
  There are many conventions in the literature for the
  type-II (forward) transform, varying in the sign of the exponential,
  presence or absence of the $2\pi$ factor, and 
  whether frequency indices are symmetric about zero
  (compare, e.g., \cite{dutt1993fast,keiner2009using,barnett2019parallel,kircheis2019direct,matlabnufft}).
  Ours
  is close to that for the plain DFT.
  Conversions between conventions can be performed numerically using
  simple linear-time operations on inputs or outputs.
\end{rmk}

\subsection{From Vandermonde to Cauchy-like}
The fundamental idea behind our direct inversion method is to transform from a problem involving a Vandermonde matrix to an equivalent problem involving a Cauchy-like matrix \smash{$C = VF^*$}, where $F$ is a DFT matrix defined below in~\cref{eq:F}. This transformation is useful because  $C$ has low rank properties that can be analyzed and exploited~\cite{heinig1995inversion}. Similar ideas have been proposed for superfast direct Toeplitz solvers~\cite{heinig1995inversion, wilber2021computing,xia2012superfast,xi2016computing}, though key details turn out to be quite different in this setting.

We begin by observing that $V$ satisfies the Sylvester matrix equation
\begin{equation} \label{eq:vand_disp}
\Gamma V - VQ = uv^*, 
\end{equation}
where \smash{$(\cdot)^*$} denotes the Hermitian transpose, \smash{$uv^*$} is a rank 1 matrix, $Q =  \left( \begin{smallmatrix} 0 & 1\\ I & 0  \end{smallmatrix} \right) \in \mathbb{C}^{n \times n}$ is the circular shift-down matrix, and
\begin{equation*}
  \Gamma = \diag(\gamma_1, \ldots, \gamma_m).
\end{equation*}
Specifically, $u$ has $j$th entry \smash{$\gamma_j^n-1$}, while \smash{$v^* = [0,\dots,0,1]$}.
Let $F$ be the normalized DFT matrix
\begin{equation}
\label{eq:F}
  F_{jk} = \omega^{j(2k-1)}/\sqrt{n} \quad \text{for } 1\le j,k \le n \quad \text{where}\quad \omega = e^{ \pi i/n}.
\end{equation}
The circulant matrix $Q$ is diagonalized by $F$:
\begin{equation*}
  F Q F^* = \Lambda \coloneqq \diag(\omega^{2}, \omega^{4}, \ldots, \omega^{2n}).
\end{equation*}
Letting \smash{$C = VF^*$}, it follows from~\cref{eq:vand_disp} and \smash{$F F^*=I$} that 
\begin{equation} \label{eq:diag_sylv}
  \Gamma C - C\Lambda = uw^*, 
\end{equation}
where we have defined $w = Fv$,          
and this in turn implies that
\begin{equation} \label{eq:cauchystrucv}
  C_{jk} = \frac{u_j\overline{w_k}}{ \Gamma_{jj} - \Lambda_{kk}} = \frac{u_j \overline{w_k}}{\gamma_j - \omega^{2k}}.
\end{equation}
A matrix with the structure\smash{ $C_{jk} = a_jb_k / (c_j - d_k)$} for numbers \smash{$\{a_j\}$, $\{b_k\}$, $\{c_j\}$}, and \smash{$\{d_k\}$} is called a \emph{Cauchy-like matrix}~\cite{koev1999matrices}. The algebraic structure of these matrices is useful in various ways~\cite{kailath1995displacement}.  For example, one can generate subblocks of $C$ using~\cref{eq:cauchystrucv} (or related formulas), which only require accessing entries in four vectors. In addition to its algebraic structure, $C$ possesses many submatrices that have low numerical rank.  As we detail precisely in \cref{sec:rank_structs}, the low rank structure of $C$ depends on the distribution of the nodes \smash{$\{\gamma_1,\ldots,\gamma_m\}$} relative to
the $n$th roots of unity
\hbox{\smash{$\{\omega^{2}, \omega^{4}, \ldots, \omega^{2n}\}$}}.
Our analysis of the rank structure of $C$ is constructive and uses the displacement relation \cref{eq:diag_sylv}. We show that one can cheaply generate low rank approximations for these submatrices using small Sylvester matrix equations related to~\cref{eq:diag_sylv}. This allows us to construct an hierarchical approximation to $C$ while never explicitly forming or storing $C$ and its relevant submatrices.

\subsection{A displacement-based superfast solver}

Our proposed direct solver approximates a least-squares solution to
$Cy=b$, and then returns \smash{$x = F^*y$}.
The latter is an approximate solution to \cref{eq:least_squares},
recalling that \smash{$C=VF^*$}.
The solver is described in pseudocode in \cref{alg:generalsolver}.
It employs the following ingredients, which we develop over the course of the rest of the paper:
\begin{itemize}
\item \textbf{Hierarchical rank structure.} In \cref{sec:low-rank-properties}, we show that $C$ can be approximated to high accuracy by a matrix $H$ which possesses hierarchical low rank structure under the hierarchical semiseparable (HSS) format~\cite{chandrasekaran2006fast, chandrasekaran2007superfast,martinsson2011fast}.
  For an approximation tolerance $\epsilon$, the off-diagonal blocks of $H$ have rank at most $k$, where 
  \begin{equation} \label{eq:rank-bound}
    k \le \left\lceil \frac{2\log(4/\epsilon)\log(4n)}{\pi^2} \right\rceil = \order(\log (n) \log(1/\epsilon)).
  \end{equation}
\item \textbf{Superfast construction by ADI.} We provide a construction algorithm for $H\approx C$ that combines the factored alternating direction implicit (ADI) method~\cite{benner2009adi,townsend2018singular} with an interpolative decomposition~\cite{cheng2005compression}. This algorithm runs in complexity \newline  \smash{$\order(mk^2) = \order(m \log^2 (n) \log^2(1/\epsilon))$} time.
\item \textbf{Superfast least-squares solver.} For a square HSS matrix $H$ with off-diagonal blocks of rank $k$ , the linear system $Hx = b$ can be solved in \smash{$\order(mk^2)$} operations~\cite{chandrasekaran2006fast, martinsson2011fast}. Less work has focused on the least-squares case. A semidirect method based on recursive skeletonization is described in~\cite{ho2014fast}. The  solver we develop here is a generalization of the specialized least-squares solver for overdetermined Toeplitz systems introduced in~\cite{xi2014superfast}.  
\end{itemize}
While our compression strategy takes special advantage of the displacement structure of $C$, our least-squares solver (step 4 in \Cref{alg:generalsolver}) is generic and can be applied to any rectangular system with HSS rank structure. 
 
 \begin{algorithm}[t!]
\caption{A superfast least-squares solver for $Vx = b$.  (Type-II NUDFT inversion)}
\label{alg:generalsolver}
\begin{algorithmic}[1]
  \State Compute \smash{$w = Fv$}, where $v$ is as in~\cref{eq:vand_disp}.

\State Use ADI and~\cref{eq:diag_sylv} to generate \smash{$H \approx C$}, where $H$ is a rectangular HSS matrix (see \cref{sec:rank_structs})
\State Solve \smash{$H y = b$} in the least-squares sense (see \cref{sec:hier-semis-least}). 
\State Compute \smash{$x = F^* y.$} 
\end{algorithmic}
\end{algorithm}

\subsection{Related work} 
\label{sec:comp-prev-work}

There are many approaches to the inverse NUDFT problem,
and we can only give a brief overview (also see \cite[Sec.~1]{kircheis2023direct}).
Standard approaches include (i) working directly with the system $Vx = b$,
(ii) working with the normal equations, \smash{$V^*Vx = V^*b$}, and (iii) working with the the adjoint
(or ``second kind'')
normal equations, \smash{$VV^*y = b$}.
Variants which sandwich a diagonal weight matrix between $V$ and \smash{$V^*$} also exist
\cite{feichtinger95,kircheis2023direct}.

A major class of methods for solving these systems is the iterative methods,
which can leverage fast implementations of the forward NUDFT transform.
Since the normal and adjoint normal linear systems are
positive semidefinite, the conjugate gradient (CG) method can be applied~\cite{saadbook,feichtinger95,fessler2005toeplitz,ruiz2018nonuniform}. This is especially appealing
for the normal equations since
\smash{$V^*V$} is a Toeplitz matrix, for which a fast matrix-vector product is available via a
pair of padded FFTs (whose prefactor is smaller than that of NUDFT transforms).
The adjoint normal matrix \smash{$VV^*$} is not in general Toeplitz, but may still be applied
via a pair of NUDFTs (a type-I followed by a type-II).
Iterative methods are quite effective as long as $V$ is well-conditioned.  However, convergence rates for these methods depend on the condition number \smash{$\kappa_2(V)$}, defined in \cref{eq:condition}. Unless strong restrictions are placed on the sample locations~\cite{donoho89,feichtinger95,ruiz2018nonuniform,yu2023stability}, \smash{$\kappa_2(V)$} can increase without bound.
Diagonal preconditioners for the above normal systems have been developed,
based on local sampling spacings (Voronoi weights) \cite{feichtinger95},
or weights which optimize the Frobenius-norm difference from the
pseudoinverse (so-called ``\smash{sinc$^2$} weights'') \cite{greengard2006fast}.
These are related to density compensation (or ``gridding'') weights in MRI
\cite{nattererbook}.
Moving beyond diagonal preconditioners,
Toeplitz systems also have various circulant preconditioners
available that exploit the FFT \cite{chan2007introduction}.

Another class of methods are the direct methods. These have some advantages over iterative methods in some settings:
\begin{itemize}
\item \textbf{Poor conditioning.}
  In many applications, the sample locations may be highly irregular, clustered,
  or have regions of low density relative to the shortest Fourier wavelength.
  For these problems, $V$ is ill-conditioned,
  and iterative methods may converge slowly or not at all
  \cite{feichtinger95}.
  In contrast, the runtime of a direct solver does not depend on problem conditioning. 
\item \textbf{Multiple right-hand sides.} Some applications involve solving a sequence of inverse NUDFT problems with the same sample locations \smash{$\{p_j\}_{j = 1}^m$} but many different right-hand sides $b$. For direct solvers which factor the matrix $V$ (or the transformed matrix $C$), the factorization can be stored and reused to solve multiple right-hand sides quickly (e.g, for our solver, $\mathcal{O}(mk)$ operations per right-hand side for the numerical rank $k$ considered in \cref{eq:rank-bound}). 
\end{itemize}

The \smash{${\mathcal O}(mn^2)$} cost
of dense, unstructured direct methods (e.g., column-pivoted QR) makes them essentially useless beyond
a few tens of thousands of unknowns.
Turning to fast direct methods, one class is based on the normal
equations, which can be solved by fast~\cite{kailath1995displacement,gohberg1995fast} or superfast direct Toeplitz solvers~\cite{xia2012superfast,wilber2021computing}. However, for ill-conditioned problems, normal equations-based direct solvers can have substantially reduced accuracy as the condition number for the normal equations is equal to the square of the original matrix, \smash{$\kappa_2(V^*V) = \kappa_2(V)^2$}. Our direct solver works directly with $V$ to avoid this issue. 

Another direct method was developed in~\cite{kircheis2019direct}, which we refer to as the Kircheis--Potts algorithm. This method is based on an optimization routine that finds a specially structured matrix $\Psi$ such that $V\Psi  \approx mI$. Constructing $\Psi$ requires \smash{$\mathcal{O}(m^2 +n^2)$} operations, which can be prohibitive in large-scale settings. However, once constructed, $\Psi$ can be applied to right-hand sides to solve the inverse NUDFT problem at the cost of an FFT plus some sparse matrix-vector products. We find (see \cref{sec:Direct})
that this method works well when the nodes of $V$ are clustered without irregular gaps.   

Recently Kircheis and Potts proposed a direct solution via
\smash{$x \approx V^* \! \diag(\mu) b$}, where the weight vector $\mu$ is precomputed by solving  a

adjoint Vandermonde system of larger size $(2n\!-\!1)\times m$ \cite[Sec.~3]{kircheis2023direct}.
The latter enforces that the weights \smash{$\mu_j$} at nodes \smash{$p_j$} form an exact quadrature
scheme on $p\in(0,1)$ for all modes \smash{$e^{2\pi i k p}$} with $k=-n\!+\!1,\dots,n\!-\!1$,
which can only in general be consistent when $m\ge2n\!-\!1$.
However, to achieve a fast scheme this system must be solved iteratively (again by CG on normal or adjoint normal
equations),
and in our experience this converges no more reliably than
the original NUDFT system.

The direct method for $Vx =b$ that is perhaps closest in spirit to our own work is that of~\cite{dutt1995fast}: this evaluates the Lagrange interpolant (cotangent kernel)
using a fast multipole method, and
is unfortunately limited to the setting where $V$ is square.  There is a long history of methods that have been proposed for solving square systems involving Cauchy-like and related matrices, including $\mathcal{O}(n^2)$ methods based on exploiting the preservation of displacement structures in Schur complements~\cite{gohberg1995fast}. Heinig supplies a brilliant precursor to superfast methods for Toeplitz, Hankel, and Cauchy-like matrices~\cite{heinig1995inversion}, including a divide-and-conquer $\mathcal{O}(n \log^3 n)$ method for square systems involving well-conditioned Cauchy-like matrices.

\begin{rmk}[Overdetermined vs underdetermined problems]\label{r:types}

  As the above (and \cref{sec:experiments}) suggests,
  the condition number \smash{$\kappa_2(V)$} of a problem has a dramatic effect on the relative merits of different algorithms
  (iterative vs direct).
  It is thus worth distinguishing
  three classes of analysis results on \smash{$\kappa_2(V)$} in the literature.
  \begin{itemize}
  \item
  When the type-II linear system $Vx=b$ is overdetermined,
  as in our study, \smash{$\kappa_2(V)$} controls the sensitivity of the solution
  \cite[Ch.~15]{nla}
  (loosely, the factor by which $\|x\|$ may exceed $\|b\|$).
  
  The absence of {\em gaps} in the sampling set \smash{$\{p_j\}_{j=1}^m$}
  appears to guarantee that \smash{$\kappa_2(V)$} remain small
  (see \cite{donoho89}, and for Voronoi weighting see \cite[Prop.~2]{feichtinger95} and \cite[Lem.~4.2]{adcock14}).
  Conversely, gaps somewhat larger than the wavelength $\pi/n$ at the Nyquist
  frequency $n/2$ appear to guarantee ill-conditioning. 
  \item
  For the square case $m=n$, the problem becomes one of interpolation
  rather than least-squares fitting
  \cite{dutt1995fast}, and \smash{$\kappa_2(V)$} still reflects its sensitivity.
  Recent results show that this is well conditioned when the sample points
  have deviations from uniformity limited to a fraction of the uniform
  spacing $1/n$
  \cite{austin2017trigonometric,yu2023stability}.
  
\item
    For the underdetermined type-II case $m<n$, $Vx=b$ always has a nullspace, so
    regularization is always needed \cite{kunis2007stability} or one must choose an appropriate minimum-norm solution.
      However, \smash{$\kappa_2(V)$}
      instead describes the sensitivity of the adjoint (type-I) overdetermined linear system \smash{$V^*y = b$},
      interpreted as solving for the $n$ {\em strengths} \smash{$y_k$} whose
      type-I transform matches a given set of Fourier coefficients \smash{$b_j$}, $j=1,\dots, m$.
      This problem is well studied in the {\em superresolution} literature,
      where analysis shows polynomial
      ill-conditioning associated with clustering of
      (as opposed to gaps in) the nodes
      \cite{bazan2000conditioning,batenkov2019spectral,kunis21}. 
      Here \smash{$\kappa_2$} is controlled by the {\em minimum}
      (as opposed to maximum) node separation,
      but this class of results has no bearing on the sensitivity of
      \cref{eq:least_squares}.
      Indeed, we will see that clustering does {\em not}
      in itself induce poor conditioning (see Grid 2 below).   %
      \end{itemize}
\end{rmk}
    
\subsection{Organization} The rest of the paper is organized as follows: In \cref{sec:low-rank-properties}, we briefly review the connection between low rank approximation and displacement rank. Then in \cref{sec:rank_structs}, we use this framework to explain low rank properties of the transformed matrix $C$ and develop a construction algorithm for an HSS matrix that approximates $C$ well. In \cref{sec:hier-semis-least}, we describe the rectangular HSS solver used in our direct inversion scheme. Numerical experiments are discussed in \cref{sec:experiments},
starting with a summary of recommendations for which solver to use in
which regime. A brief discussion of extensions and future work is given in \cref{sec:extensions}. The five iterative methods against which we
compare are detailed in an appendix.

\section{The singular values of matrices with low displacement rank}
\label{sec:low-rank-properties}
Our solver makes use of \emph{displacement structure}, a property possessed by many highly structured matrices, including Toeplitz, Cauchy, Hankel, and Vandermonde matrices \cite{beckermann2016singular}. A matrix $X$ is said to have $(A, B)$-displacement structure if it satisfies the Sylvester matrix equation
\begin{equation} \label{eq:displacement}
  AX - XB = M
\end{equation}
for some low rank matrix $M$. The rank of $M$ is called the displacement rank of $X$. As seen in~\cref{eq:diag_sylv},  $C$ has $(\Gamma, \Lambda)$-displacement structure with a \emph{displacement rank} of $1$.  Following~\cite{beckermann2016singular}, we use the displacement structure of $C$ to explain its low rank properties. We begin with a brief review of the connection between displacement structure and singular value decay.

Consider the displacement relation \cref{eq:displacement}, where $A$ and $B$ are normal matrices. 
We assume the spectra of $A$ and $B$ are contained by bounded, disjoint sets \smash{$E_1$} and \smash{$E_2$} in the complex plane: \smash{$\lambda(A) \subset E_1$ and $\lambda(B) \subset E_2$}. It is shown in~\cite{beckermann2016singular} that
\begin{equation} 
\label{eq:zolobound_svs}
\sigma_{k \rho + 1}(X) \leq Z_{k}(E_1, E_2) \|X\|_2,
\end{equation}
where $\rank(M) \leq \rho$, and \smash{$Z_{k}(E_1, E_2)$} is the $k$th \textit{Zolotarev number} associated with \smash{$E_1, E_2$}, defined to be the solution to a certain a rational extremal problem involving \smash{$E_1$} and \smash{$E_2$}. For a detailed overview of Zolotarev numbers in this context, see~\cite{beckermann2016singular, wilber2021computing}. When \smash{$E_1$} and \smash{$E_2$} are well-separated, \smash{$Z_k$} decays rapidly and we therefore expect that $X$ is of low numerical rank. For our purposes, \smash{$E_1$} and \smash{$E_2$} will always be selected as arcs on the unit circle. The following lemma, restated from~\cite{wilber2021computing}, is crucial for our results:

\begin{lemma} \label{lemma:arcs}
Let $\mathcal{A}_J = \{ e^{it} : t \in [\tau_1, \tau_2]\}$, $\mathcal{A}_K = \{ e^{it} : t \in [\rho_1, \rho_2]\}$. Then, 
\begin{equation} \label{eq:bounds_arcs}
Z_k(\mathcal{A}_J, \mathcal{A}_K) \leq 4 \mu^{-2k}, \quad \mu = \exp\left( \frac{\pi^2}{2 \log (16 \eta) } \right), 
\end{equation}
where 
\begin{equation}
 \eta = \frac{| \sin( (\rho_1- \tau_1)/2 ) \sin ( (\rho_2 - \tau_2)/2 )|}{ | \sin( (\rho_2 - \tau_1)/2) \sin ( (\rho_1 - \tau_2) /2) | }.
 \end{equation}
\end{lemma}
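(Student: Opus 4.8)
The plan is to reduce the Zolotarev number on two arcs to a Zolotarev number on two real intervals via a Möbius transformation, and then invoke the classical bound on real intervals. First I would recall the standard fact (see \cite{beckermann2016singular}) that Zolotarev numbers $Z_k(E_1,E_2)$ are invariant under any Möbius transformation $T$ applied simultaneously to both sets: $Z_k(E_1,E_2) = Z_k(T(E_1),T(E_2))$, since composing an extremal rational function of degree $(k,k)$ with $T^{-1}$ yields another rational function of the same type, and the sup/inf ratio is unchanged. The strategy is therefore to find a Möbius map sending the two arcs $\arc_J$ and $\arc_K$ to two real intervals positioned symmetrically about the origin, say $[-\beta,-\alpha]$ and $[\alpha,\beta]$ with $0<\alpha<\beta$, because for such a configuration the Zolotarev number has the sharp known estimate $Z_k \le 4\mu^{-2k}$ with $\mu = \exp\!\bigl(\pi^2/(2\log(16\gamma))\bigr)$, where $\gamma = \beta/\alpha$ is the cross-ratio parameter.

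The key computational step is to identify the right Möbius transformation and to compute the resulting cross-ratio. Since Möbius maps preserve cross-ratios, the quantity $\gamma$ (equivalently the cross-ratio of the four arc endpoints $e^{i\tau_1}, e^{i\tau_2}, e^{i\rho_1}, e^{i\rho_2}$) can be read off directly from the endpoints without ever writing down $T$ explicitly. Concretely, I would compute the absolute cross-ratio of the four points on the circle; using the chord-length identity $|e^{i\theta}-e^{i\phi}| = 2|\sin((\theta-\phi)/2)|$, each factor $|e^{i a}-e^{i b}|$ becomes $2|\sin((a-b)/2)|$, and the factors of $2$ cancel in the ratio. Pairing the endpoints so that $\arc_J$'s endpoints and $\arc_K$'s endpoints are separated appropriately gives exactly
\[
 \eta \;=\; \frac{|\sin((\rho_1-\tau_1)/2)\,\sin((\rho_2-\tau_2)/2)|}{|\sin((\rho_2-\tau_1)/2)\,\sin((\rho_1-\tau_2)/2)|},
\]
which one checks equals the cross-ratio parameter $\gamma$ of the image intervals (up to the standard normalization that makes it $\ge 1$). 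Substituting $\gamma = \eta$ into the interval bound yields \cref{eq:bounds_arcs}.

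The main obstacle is bookkeeping rather than deep mathematics: one must verify that the Möbius map genuinely carries the two \emph{arcs} (not just their endpoints) onto two \emph{disjoint bounded real intervals} of the correct symmetric form, and that the orientation/pairing of endpoints is chosen so that the two arcs map to the two separated intervals rather than to intervals that interleave or that straddle infinity. This requires checking that the arcs are disjoint (guaranteed by the hierarchical construction in which this lemma is applied) and tracking which complementary arc maps to the unbounded component. Once the correct pairing is fixed, the cross-ratio computation is routine trigonometry, and the final bound follows immediately from the known estimate for symmetric real intervals together with Möbius invariance. Since this lemma is quoted verbatim from \cite{wilber2021computing}, I would cite that reference for the detailed verification and present here only the reduction and the cross-ratio identification.
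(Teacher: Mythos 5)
Your plan is correct and follows essentially the same route as the paper, whose proof of this lemma is simply a citation to Theorem~8 of \cite[Ch.~4]{wilber2021computing}: that argument proceeds exactly by M\"obius invariance of $Z_k$, mapping the two disjoint arcs to two disjoint real intervals, and invoking the Beckermann--Townsend cross-ratio bound $Z_k \le 4\exp\left(-k\pi^2/\log(16\gamma)\right)$, with the chord-length identity $|e^{ia}-e^{ib}| = 2|\sin((a-b)/2)|$ giving $\gamma = \eta$. One small slip worth noting: for the symmetric configuration $[-\beta,-\alpha]\cup[\alpha,\beta]$ the relevant cross-ratio is $(\alpha+\beta)^2/(4\alpha\beta)$ rather than $\beta/\alpha$, but since you ultimately compute $\eta$ as the absolute cross-ratio of the four arc endpoints (which is M\"obius invariant), the bound you obtain is the correct one.
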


\begin{proof} See Theorem 8 in~\cite[Ch.~4]{wilber2021computing}.
\end{proof}

Bounds on the the singular values of $X$ imply estimates for its so-called $\epsilon$-rank, which is defined as follows:
\begin{definition}
Let $0 < \epsilon < 1$ and $m \geq n$. The $\epsilon$-rank of \smash{$X \in \mathbb{C}^{m \times n}$}, denoted by \smash{$\erank(X)$}, is the smallest integer $0 \leq k \leq n\!-\!1$ such that \smash{$\sigma_{k+1}(X) \leq \epsilon \|X\|_2$}. 
\end{definition}
By definition, if \smash{$\erank(X) \leq k$}, then there is a rank-$k$ matrix \smash{$X_k$} where \smash{$\|X -X_k\|_2 \leq \epsilon \|X\|_2$}. 

\subsection{The factored alternating direction implicit method} Intimately related to the Zolotarev numbers is the alternating direction implicit (ADI) method~\cite{lebedev1977zolotarev, peaceman1955numerical} for solving the Sylvester matrix equation \cref{eq:displacement}. Setting \smash{$X^{(0)} = 0$}, each ADI iteration consists of two steps: 
\vspace{.25cm}
\begin{enumerate}
\item  Solve for $X^{(j + 1/2)}$ in $(A-\beta_{j+1}I)X^{(j + 1/2)} = X^{(j)} (B - \beta_{j + 1}I) + M$. 
\item  Solve for $X^{(j + 1)}$ in $X^{(j+1)}(B - \alpha_{j + 1}I) = (A-\alpha_{j+1}I)X^{(j+1/2)}-M$. 
\end{enumerate}
\vspace{.25cm}
After $k$ iterations, an approximation \smash{$X^{(k)} \approx X$} is constructed. 
Convergence of the method depends on the choice of \textit{shift parameters} \smash{$\{\alpha_j, \beta_j\}_{j = 1}^k$}. The shift parameters that minimize the error after $k$ iterations are given by the zeros and poles of a type $(k,k)$ rational function that solves an optimization problem on the spectral sets $\lambda(A)$ and $\lambda(B)$ known as Zolotarev's third problem~\cite{istace1995third,lebedev1977zolotarev, zolotarev1877application}. 
When optimal shift parameters are used and we let $E_1, E_2$ serve as stand-ins for the spectral sets that they contain, one can show that~\cite{townsend2018singular}
\begin{equation*}
  \big\|X - X^{(k)}  \big\|_2 \leq Z_k(E_1, E_2) \|X\|_2 .
\end{equation*}
In our case, $E_1$, $E_2$ are always arcs on the unit circle. The optimal shift parameters for this case are known and can be computed at a trivial cost~\cite[Ch.~1]{wilber2021computing} using elliptic functions.

A mathematically equivalent version of ADI called factored ADI (fADI)~\cite{benner2009adi, li2002low} constructs the solution \smash{$X^{(k)}$} in the form of a low rank decomposition \smash{$X^{(k)} = ZW^*$}, where $Z, W$ each have $k\rho$ columns. It uses as input a factorization \smash{$M = FG^*$}, where \smash{$F \in \mathbb{C}^{m \times \rho}$}, \smash{$G \in \mathbb{C}^{n \times \rho}$}.  The fADI algorithm is our primary engine for constructing low rank approximations to submatrices of $C$.  The cost of this method depends on the cost for shifted matrix-vector products $z\mapsto (M+\gamma I)z$ and shifted inverts $z\mapsto (M+\gamma I)^{-1}z$ for $M\in \{A,B\}$.
In our case, $A$ and $B$ will always be diagonal, so the cost for applying $k$ iterations of fADI on a $p \times q$ submatrix of $C$ is only $\mathcal{O}((p+q)k)$ operations. We supply pseudocode for this procedure in \cref{sec:fadi_alg}. Detailed  discussion on the connection between Zolotarev numbers and ADI can be found in~\cite{townsend2018singular, wilber2021computing}.

\section{The low rank properties of $C$} \label{sec:rank_structs}
As shown in~\cref{eq:diag_sylv},  the matrix $C$ has $(\Gamma, \Lambda)$ displacement structure with displacement rank $1$.  Since $\Gamma$ and $\Lambda$ potentially have interlaced (and possibly coinciding) eigenvalues,~\cref{eq:diag_sylv} is typically not useful for understanding the singular values of the entire matrix $C$.  However,~\cref{eq:diag_sylv} can be used to show that certain \emph{submatrices} of $C$ have low numerical rank.
Throughout, we use the notation $C_{JK}$ to denote the submatrix of $C$ with rows indexed by members of set $J$ and columns indexed by members of set $K$.

We begin with an example.
Suppose the index sets are \hbox{$J = \{q \! + \! 1,q \! + \! 2, \ldots, m\}$} and \hbox{$K = \{1, 2, \ldots, p\}$}, so that \smash{$C_{JK}$} is a submatrix in the lower left corner of $C$. By~\cref{eq:diag_sylv}, 
\begin{equation}
\label{eq:dispY}
 \Gamma_J C_{JK} - C_{JK} \Lambda_K = (uv^*)_{JK},
\end{equation} 
where \smash{$\Gamma_J = \diag( \gamma_{q+1}, \ldots, \gamma_{m})$} and \smash{$\Lambda_K = \diag( \omega^2, \ldots, \omega^{2p} )$}.
Choose arcs \smash{$\mathcal{A}_J$} and \smash{$\mathcal{A}_K$} so that \smash{$\lambda(\Gamma_J) \subset \mathcal{A}_J$,}  \smash{$\lambda(\Lambda_K) \subset \mathcal{A}_K$}. Whenever these two arcs do not overlap, \cref{lemma:arcs} shows that the singular values of $C_{JK}$ decay rapidly. Indeed, by~\cref{eq:zolobound_svs,lemma:arcs},
$$ \sigma_{k+1}(C_{JK}) \leq 4 \mu^{-2k} \|C_{JK}\|_2,$$
which implies bounds on \smash{$\erank(C_{JK})$}.  Moreover, we can use fADI to efficiently construct a low rank approximation to \smash{$C_{JK}$} that achieves these estimates. This gives us a scheme for compressing submatrices of $C$ that are of low numerical rank.
In the rest of this section, we use these methods to describe the hierarchical rank structure of $C$ and construct a rank-structured approximation $H\approx C$.

\subsection{A geometric perspective}

\begin{figure}[t!] 
  \centering
  \begin{minipage}{.42\textwidth} 
    \centering
    \begin{overpic}[width=\textwidth]{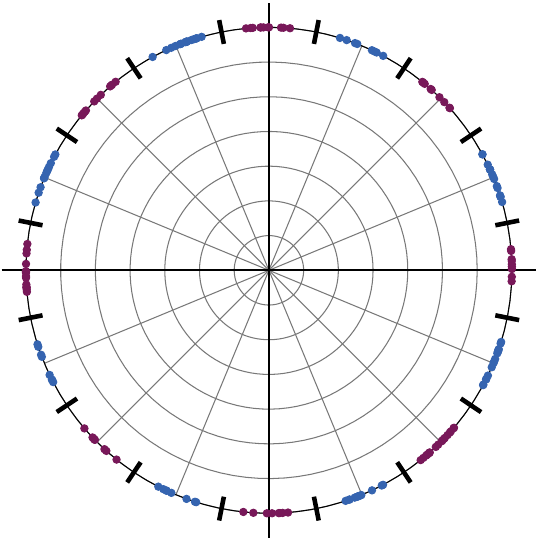}
      \put(45,-5) {\small{$Im$}}
      \put(100, 48) {\small{$Re$}}
      \put(90,75){\rotatebox{-69}{$\overbrace{\phantom{xxxx}}^{\phantom{x}\cluster_1}$}}
      \put(77,87){\rotatebox{-45}{$\overbrace{\phantom{xxxx}}^{\phantom{x}\cluster_2}$}}
      \put(61,95){\rotatebox{-25}{$\overbrace{\phantom{xxxx}}^{\phantom{x}\cluster_3}$}}
    \end{overpic}
  \end{minipage}
  \hspace{1cm}
  \begin{minipage}{.38\textwidth} 
    \hspace{1cm}
    \begin{overpic}[width=.50\textwidth]{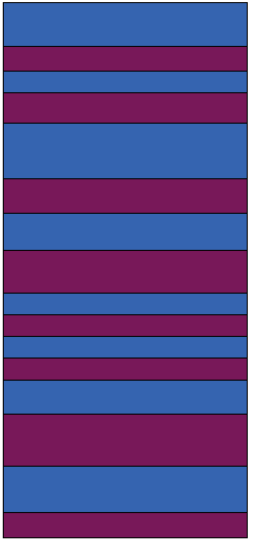}
    \end{overpic}
    \vspace{-.3cm}
    \put(-85,141){\tiny{$\cluster_1$}}
    \put(-85,133){\tiny{$\cluster_2$}}
    \put(-85,126){\tiny{$\cluster_3$}}
    \put(-80,110){\small{$\vdots$}}
    \put(-85,2){\tiny{$\cluster_{16}$}}
  \end{minipage}
  \caption{\emph{Left}: The unit circle is split into $n = 16$ regions. At the center of each region is the root of unity \smash{$\omega^{2\kappa}$}, and each region is associated with a subset of nodes (blue and magenta points) that are indexed by the cluster \smash{$\cluster_\kappa$}. Each cluster indexes those nodes \smash{$\gamma_j$} between \smash{$\omega^{2\kappa \pm 1}$}. 
    \emph{Right}: A cartoon portrayal of the $m \times n$ matrix $C$ associated with these nodes, partitioned into $n$ slabs. The number of rows in each slab corresponds to the size of the associated cluster. Our HSS approximation to $C$ works with these slabs instead of individual rows.}
  \label{fig:clusters}
\end{figure}

Not every submatrix of $C$ is compressible. However,  $C$ always contains some collection of submatrices with small $\epsilon$-rank. The locations and dimensions of these submatrices depends on the distribution of the nodes \smash{$\{\gamma_j\}_{j = 1}^m$} of the Vandermonde matrix $V$ on the unit circle. In an extreme example where all of the nodes are clustered between two adjacent roots of unity, $C$ is a low rank matrix. Similarly, one might have the case where a large fraction of the nodes are tightly clustered near only a very small collection of roots of unity. We address this more pathological setting in \cref{rem:pathological}. 

For less extreme cases, we seek a general strategy for recursively partitioning $C$ into a collection of compressible submatrices. To do this, we organize the partitioning around subsets of nodes that we index using \textit{clusters}.
\begin{definition}  For $1\le \kappa \le n$, the \emph{cluster} \smash{$\cluster_\kappa$} consists of all $j$ such that
$    \gamma_j \in \{ \exp(2\pi i t/n) : \kappa - 1/2 < t \le \kappa+ 1/2 \}.$
\end{definition}
Each cluster $\cluster_\kappa$ gives rise to a slab of rows of $C$ (see \Cref{fig:clusters}).

The use of slabs allows us to essentially treat $C$ as if it were square for the purposes of partitioning, as there are $n$ columns of $C$ and $n$ slabs of rows. It is possible for some clusters to be empty. In the next section, we apply standard hierarchical partitioning strategies to $C$ with slabs replacing rows. Then, in \cref{sec:rank-structure-c}, we bound the $\epsilon$-ranks of relevant submatrices that arise from these partitions. 

\begin{rmk} \label{rem:pathological}
In extreme cases, the number of rows in some slabs  might be a large fraction of $m$. This happens when many nodes are clustered around only a few roots of unity. If this is true and there is also a subset of nodes more regularly distributed around the unit circle, one can write \smash{$C = \tilde{C} + XY^T$}, where every column in $X$ corresponds with a root of unity around which many nodes are clustered, and \smash{$\tilde{C}$} is Cauchy-like and full rank. If $C$ were square, the system $Cy = b$ could then be solved efficiently using the Sherman-Morrison Woodbury (SMW) formula. One can in principle apply instead an adapted version of the SMW formula for least--squares problems~\cite{guttel2024sherman}, though this requires either a fast minimum-norm solver (see \cref{sec:extensions}) for \smash{$\tilde{C}^*y = b$} or an efficient formulation for the inversion of \smash{$\tilde{C}^*\tilde{C}$}.  Other pathological clustering configurations should be handled on a case-by-case basis: \cref{lemma:arcs} can be applied to identify low rank structures in the matrix, but the best partitioning strategy and subsequent solver might be quite different from what we have proposed.
\end{rmk}
 
\subsection{HODLR matrices} \label{sec:cHODLR} With a slight abuse of terminology, we say that $C$ is a \emph{HODLR (hierarchical off-diagonal low rank) matrix} 
if it can be divided into blocks
\begin{equation} \label{eq:HODLR_def}
C = \begin{bmatrix} D_{\ell} & A_{\ell r} \\ A_{r \ell} & D_{r}   \end{bmatrix},
\end{equation}
where (i) the off-diagonal blocks \smash{$A_{\ell r}$} and \smash{$A_{r \ell}$} are well-approximated by low rank matrices and  (ii) the diagonal blocks \smash{$D_{\ell}$} and \smash{$D_{r}$} can be recursively partitioned to have the same form as~\cref{eq:HODLR_def}. The partitioning is repeated recursively until a minimum block size is reached. For us, the matrices \smash{$D_{\ell}$} and \smash{$D_{r}$} are rectangular, and the number of rows in each depends on how the nodes are clustered.

Let us be more precise about this structure. We associate the HODLR matrix with a binary tree $\tree$ whose vertices we label by a post-order traversal: We label the root as $0$ and the children of $t \in \tree$ by $2t\!+\!1$ and $2t\!+\!2$ (see \Cref{fig:HODLRandTree}).
To each vertex $t \in \tree$, we associate a set of column indices \smash{$K_{t}$} such that (i) $\smash{K_0 = \{1,\ldots,n\}}$, (ii) each block of indices is contiguous, i.e.,  \smash{$K_t = \{\kappa_1,\kappa_1\! +\! 1,\ldots,\kappa_2\}$} for some $1\le \kappa_1 \le \kappa_2 \le n$, and (iii) each parent is the disjoint union of its children, i.e., \smash{$K_t = K_{2t+1} \cup K_{2t+2}$}. We then define \smash{$J_t$} to be a union of the clusters \smash{$\kappa \in K_t$}, i.e., \smash{$J_t \coloneqq \bigcup_{\kappa\in K_t} \cluster_\kappa$}.
We refer to \smash{$D_t \coloneqq C_{J_t K_t}$} as a \emph{diagonal} HODLR block. The blocks \smash{$A_{r\ell} \coloneqq C_{J_rK_\ell}$} and \smash{$A_{\ell r} \coloneqq C_{J_\ell K_r}$} are called \emph{off-diagonal} HODLR blocks.
An example with multiple levels is shown in \cref{fig:HODLRandTree}. As we prove in \cref{sec:rank-structure-c}, the off-diagonal blocks $A_{\ell r}$ and $A_{r\ell}$ have low (numerical) rank at every level of the tree.

In the special case where $m = n$ and each \smash{$\cluster_\kappa$} contains exactly one node,  the cluster-based HODLR structure we use is equivalent to the usual HODLR structure for square matrices associated with perfectly balanced binary trees~\cite{chandrasekaran2006fast, martinsson2011fast}. This corresponds to
sample points \smash{$p_j$} with bounded jitter relative to the equispaced case,
that is \smash{\hbox{$|p_{m} - (m\!-\!j)/m|  < 1/2m$}} for $j=1,\dots,m$.
However, in general, our paradigm allows for rectangular matrices with uneven divisions of rows at any  fixed level of the tree (see \Cref{fig:HODLRandTree}). 

 One could use fADI to construct low rank approximations to every relevant off-diagonal block in $C$, and then apply a solver that exploits HODLR structure to approximately solve \hbox{$Cy = b$}.  However, we do not fully explore this possibility because, in addition to HODLR structure, $C$ possesses further structure can be exploited for greater efficiency. We build on the HODLR properties of $C$ to describe these structures in the next section.

\begin{figure}
\begin{minipage}{.39\textwidth}
\begin{tikzpicture}[sibling distance=5.5pt,scale=0.9, every node/.style={scale=0.9}]
\tikzset{every tree node/.style={align=center,anchor=north}}
\tikzset{level 1/.style={level distance=35pt}}
\tikzset{level 2/.style={level distance=40pt}}
\tikzset{level 3+/.style={level distance=40pt}}
\hspace{.1 cm}  \Tree [ .\node[]{$0$ };  
										[ .\node[]{$1$ }; 
											[ .\node[]{$3$ };
												[.\node[]{$7$};]
												[.\node[]{$8$};]
											]
											[ .\node[]{$4$ };
												[.\node[]{$9$};]
												[.\node[]{$10$};]
											]
										] 
									    [ .\node[]{$2$ }; 
									    	[ .\node[]{$5$  };
									    		[.\node[]{$11$};]
												[.\node[]{$12$};]
									    	]
											[ .\node[]{$6$};
												[.\node[]{$13$};]
												[.\node[]{$14$};]
											]
										] 
									]

\end{tikzpicture}
\end{minipage}
\begin{minipage}{.32\textwidth}
\begin{tikzpicture}[scale=0.55, every node/.style={scale=0.4}]
\draw[black, thick](0,0)--(6,0);
	\draw[black, thick] (0,0)--(0,8);
\draw[black, thick] (0,8)--(6,8);
\draw[black, thick](6,0)--(6,8);
\draw[black, thick](0,3)--(6,3);
\draw[black, thick](3,8)--(3,0);
\draw[black, thick](1.5,8)--(1.5,3);
\draw[black, thick](0,6)--(3,6);
\draw[black, thick](4.5,0)--(4.5,3);
\draw[black, thick](3,2)--(6,2);  
\node[font=\fontsize{18}{18}] at (4.5,5.5) (a) {$A_{1,2}$};
\node[font=\fontsize{18}{18}] at (1.5,1.5) (a) {$A_{2,1}$};
\node[font=\fontsize{16}{16}] at (2.25,7) (a) {$A_{3,4}$};
\node[font=\fontsize{16}{16}] at (.75,5) (a) {$A_{4,3}$}; 
\node[font=\fontsize{16}{16}] at (5.25,2.5) (a) {$A_{5,6}$};
\node[font=\fontsize{16}{16}] at (3.75,1) (a) {$A_{6,5}$};  
\filldraw [fill=gray!30] (0,6) rectangle (1.5,8);
\filldraw [fill=gray!30] (1.5,3) rectangle (3,6);
\filldraw [fill=gray!30] (3,2) rectangle (4.5,3);
\filldraw [fill=gray!30] (4.5,0) rectangle (6,2);
\node[font=\fontsize{16}{16}] at (.75, 7) (a) {$D_{3}$};
\node[font=\fontsize{16}{16}] at (2.25, 4.5) (a) {$D_{4}$};
\node[font=\fontsize{16}{16}] at (3.75, 2.5) (a) {$D_{5}$};
\node[font=\fontsize{16}{16}] at (5.25, 1) (a) {$D_{6}$};
\end{tikzpicture}
\end{minipage}
\begin{minipage}{.24\textwidth}
\begin{tikzpicture}[scale=0.55, every node/.style={scale=0.4}]
\draw[black, thick](0,0)--(6,0);
	\draw[black, thick] (0,0)--(0,8);
\draw[black, thick] (0,8)--(6,8);
\draw[black, thick](6,0)--(6,8);
\draw[black, thick](0,3)--(6,3);
\draw[black, thick](3,8)--(3,0);
\draw[black, thick](1.5,8)--(1.5,3);
\draw[black, thick](0,6)--(3,6);
\draw[black, thick](4.5,0)--(4.5,3);
\draw[black, thick](3,2)--(6,2);
\draw[black, thick](.75,8)--(.75,6);
\draw[black, thick](2.25,6)--(2.25,3);
\draw[black, thick](3.75,3)--(3.75,2);
\draw[black, thick](5.25,2)--(5.25,0);
\draw[black, thick](0,7)--(1.5,7);
\draw[black, thick](1.5,4)--(3,4);
\draw[black, thick](3,3)--(4.5,3);
\draw[black, thick](4.5,1)--(6,1);  
\draw[black, thick](3,2.5)--(3.75,2.5); 

\node[font=\fontsize{18}{18}] at (4.5,5.5) (a) {$A_{1,2}$};
\node[font=\fontsize{18}{18}] at (1.5,1.5) (a) {$A_{2,1}$};
\node[font=\fontsize{16}{16}] at (2.25,7) (a) {$A_{3,4}$};
\node[font=\fontsize{16}{16}] at (.75,5) (a) {$A_{4,3}$}; 
\node[font=\fontsize{16}{16}] at (5.25,2.5) (a) {$A_{5,6}$};
\node[font=\fontsize{16}{16}] at (3.75,1) (a) {$A_{6,5}$}; 
\node[] at (1.15, 7.5) (a) {$A_{7,8}$}; 
\node[] at (0.35, 6.5) (a) {$A_{8,7}$}; 
\node[] at (2.65, 5) (a) {$A_{9,10}$}; 
\node[] at (1.9, 3.5) (a) {$A_{10,9}$};
\node[] at (4.1, 2.75) (a) {$A_{11,12}$};
\node[] at (3.4, 2.25) (a) {$A_{12,11}$}; 
\node[] at (5.6, 1.5) (a) {$A_{13,14}$}; 
\node[] at (4.9, 0.5) (a) {$A_{14,13}$}; 
\filldraw [fill=gray!30] (0,7) rectangle (.75,8);
\filldraw [fill=gray!30] (.75,6) rectangle (1.5,7);
\filldraw [fill=gray!30] (1.5,4) rectangle (2.25,6);
\filldraw [fill=gray!30] (2.25,3) rectangle (3,4);
\filldraw [fill=gray!30] (3,2.5) rectangle (3.75,3);
\filldraw [fill=gray!30] (3.75,2) rectangle (4.5,2.5);
\filldraw [fill=gray!30] (4.5,1) rectangle (5.25,2);
\filldraw [fill=gray!30] (5.25,0) rectangle (6,1);
\node[] at (.35, 7.5) (a) {$D_{7}$};
\node[] at (1.15, 6.5) (a) {$D_{8}$};
\node[] at (1.9, 5.5) (a) {$D_{9}$};
\node[] at (2.65, 3.5) (a) {$D_{10}$};
\node[] at (3.4, 2.75) (a) {$D_{11}$};
\node[] at (4.1, 2.25) (a) {$D_{12}$};
\node[] at (4.9, 1.5) (a) {$D_{13}$};
\node[] at (5.6, .5) (a) {$D_{14}$};
\end{tikzpicture}
\end{minipage}
\caption{A HODLR matrix is recursively partitioned into off-diagonal blocks associated with the tree $\tree$ (\emph{left}).
  Blocking associated with the first two levels of the tree are shown in the \emph{middle}, and blocking associated with first three levels is shown on the \emph{right}. Each of the off-diagonal blocks \smash{$A_{\ell r}$} is low rank. If the conditions in~\cref{eq:hss_diagonal_recursion} and~\cref{eq:hss_generator_recursion} also hold, we say $H$ has HSS structure.}
\label{fig:HODLRandTree}
\end{figure}
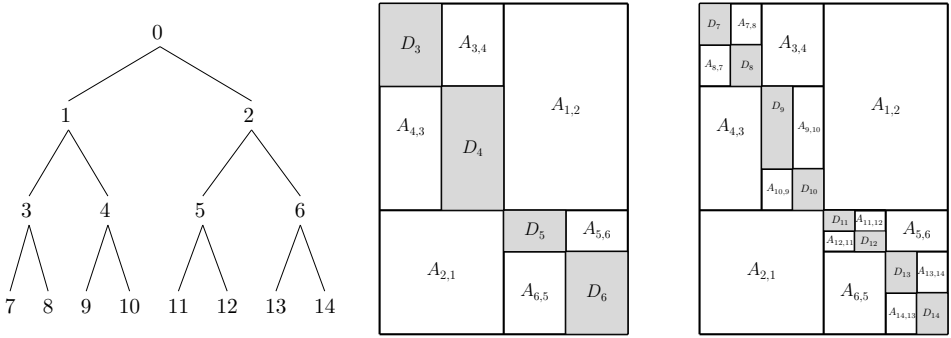

\subsection{HSS matrices} \label{sec:HSS}
In the HODLR form, each diagonal block \smash{$D_t$} (corresponding to a parent $t$ with children $\ell$ and $r$) has the recursive form
\begin{equation*}
  D_t = \twobytwo{D_\ell}{A_{\ell r}}{A_{r\ell}}{D_r}.
\end{equation*}
and we assume no relation between the off-diagonal blocks $A_{\ell r}$ and $A_{r\ell}$ across different levels of $\tree$.
A HODLR matrix is said to have \emph{HSS structure}~\cite{chandrasekaran2006fast} if the off-diagonal blocks are related in the following (recursive) way:
If $t\in\tree$ is not a root or leaf and $t$ has children $\ell$ and $r$, then
\begin{equation} \label{eq:hss_diagonal_recursion}
  D_t = \twobytwo{D_\ell}{A_{\ell r}}{A_{r\ell}}{D_r} = \twobytwo{D_\ell}{U_\ell B_{\ell r} V_r^*}{U_r B_{r \ell} V_\ell^*}{D_r}, 
\end{equation}
where the the \emph{basis matrices} \smash{$U_t$} and \smash{$V_t$} are related across levels of the tree by relations
\begin{equation} \label{eq:hss_generator_recursion}
  U_t = \twobytwo{U_\ell}{0}{0}{U_r} \twobyone{R_{\ell t}}{R_{r t}}, \quad V_t = \twobytwo{V_\ell}{0}{0}{V_r} \twobyone{W_{\ell t}}{W_{r t}}.
\end{equation}
This structure is analogous to the structure of the standard square HSS matrix (closely related to the \smash{$\mathcal{H}^2$} structure~\cite{hackbusch2015hierarchical}), except that the diagonal blocks are rectangular with  dimensions determined via the HODLR tree for $C$, which in turn is based on the clusters associated with $C$. The recursions \cref{eq:hss_diagonal_recursion} and \cref{eq:hss_generator_recursion} allow the entire matrix \smash{$C = D_0$} to be assembled from the following \emph{generator} matrices:
\begin{itemize}
\item Leaf generators: \smash{$D_t$}, \smash{$U_t$}, and \smash{$V_t$} for each leaf node $t\in\tree$.
\item Parent-child generators: \smash{$R_{c t}$} and \smash{$W_{c t}$} for every parent $t\in\tree$ with child $c$.
\item Sibling generators: \smash{$B_{\ell r}$} for every pair of siblings $\ell$ and $r$.
\end{itemize}

\subsubsection{HSS rows and columns}
The construction of the generator matrices requires finding \smash{$U_t$} and \smash{$V_t$} at each level so that they are \emph{nested} in the sense that if $t$ has children $\ell$ and $r$, the first rows of \smash{$U_t$} are spanned by \smash{$U_\ell$}, with the rest of \smash{$U_t$}'s rows are spanned by \smash{$U_r$}. To do this, we work with the \emph{HSS rows} and \emph{HSS columns} of the matrix $C$.

An HSS column of $C$ associated with $ t \in \tree$ is formed by selecting a strip of contiguous columns \smash{$K_t$} and excluding the rows in \smash{$J_t$}. We denote this as \smash{$A_t^{\rm col} = C(J_t^c, K_t)$} where \smash{$J_t^c = J_0 \setminus J_t$} denotes the complement of \smash{$J_t$} defined in \cref{sec:cHODLR}.
One can similarly define the HSS row \smash{$A_t^{\rm row} = C(J_t, K_t^c)$}. \Cref{fig:HSSrows_cols} displays some HSS rows and columns.

If the maximum rank of any HSS block row or column in $C$ is $k$,  then there exists a set of off-diagonal generators $\{U_t,V_t,R_{ct},W_{ct},B_{\ell r}\}$ where each generator has at most $k$ columns. By storing only the generators, the HSS matrix can be represented using only $\mathcal{O}((m+n)k)$ storage. Using an interpolative decomposition method~\cite{cheng2005compression} further reduces the storage cost since each \smash{$B_{\ell r}$} is a submatrix of $C$ and can therefore be stored via a small list of indices.  In practice, the rank of the off-diagonal leaf generator matrices are selected adaptively, so $k$ can vary for each leaf node. 

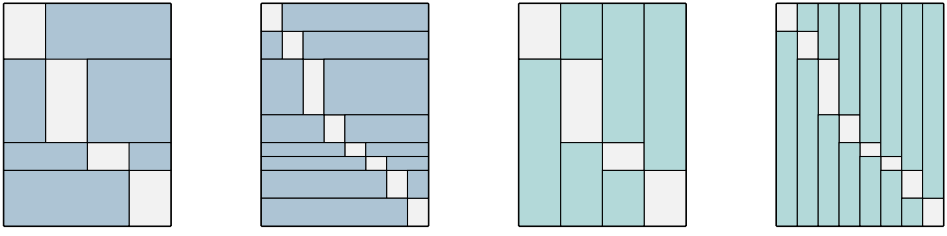
\begin{figure}
\centering
\begin{tikzpicture}[scale=0.37, every node/.style={scale=0.4}]
\draw[black, thick](0,0)--(6,0);
	\draw[black, thick] (0,0)--(0,8);
\draw[black, thick] (0,8)--(6,8);
\draw[black, thick](6,0)--(6,8);
\filldraw [fill=gray!10] (0,6) rectangle (1.5,8); 
\filldraw [fill=gray!10] (1.5,3) rectangle (3,7);
\filldraw [fill=gray!10] (3,2) rectangle (5,3);
\filldraw [fill=gray!10] (4.5,0) rectangle (6,2); 
\filldraw [fill=myblue!50] (1.5, 6) rectangle (6, 8); 
\filldraw [fill=myblue!50] (0, 3) rectangle (1.5, 6); 
\filldraw [fill=myblue!50] (3, 3) rectangle (6, 6); 
\filldraw [fill=myblue!50] (0, 2) rectangle (3, 3); 
\filldraw [fill=myblue!50] (4.5, 2) rectangle (6, 3);
\filldraw [fill=myblue!50] (0, 0) rectangle (4.5, 2); 
\end{tikzpicture}
\hspace{1cm}
\begin{tikzpicture}[scale=0.37, every node/.style={scale=0.4}]
\draw[black, thick](0,0)--(6,0);
	\draw[black, thick] (0,0)--(0,8);
\draw[black, thick] (0,8)--(6,8);
\draw[black, thick](6,0)--(6,8);
\filldraw [fill=gray!10] (0,7) rectangle (.75,8); 
\filldraw [fill=gray!10] (.75,6) rectangle (1.5,7);
\filldraw [fill=gray!10] (1.5,4) rectangle (2.25,6);
\filldraw [fill=gray!10] (2.25,3) rectangle (3,4); 
\filldraw [fill=gray!10] (3,2.5) rectangle (3.75,3); 
\filldraw [fill=gray!10] (3.75,2) rectangle (4.5,2.5); 
\filldraw [fill=gray!10] (4.5,1) rectangle (5.25,2); 
\filldraw [fill=gray!10] (5.25,0) rectangle (6,1);
\filldraw [fill=myblue!50] (.75, 7) rectangle (6, 8);
\filldraw [fill=myblue!50] (0, 6) rectangle (.75, 7); 
\filldraw [fill=myblue!50] (1.5, 6) rectangle (6, 7); 
\filldraw [fill=myblue!50] (0, 4) rectangle (1.5, 6); 
\filldraw [fill=myblue!50] (2.25, 4) rectangle (6, 6); 
\filldraw [fill=myblue!50] (0, 3) rectangle (2.25, 4); 
\filldraw [fill=myblue!50] (3, 3) rectangle (6, 4); 
\filldraw [fill=myblue!50] (0, 2.5) rectangle (3, 3); 
\filldraw [fill=myblue!50] (3.75, 2.5) rectangle (6, 3); 
\filldraw [fill=myblue!50] (0, 2) rectangle (3.75, 2.5); 
\filldraw [fill=myblue!50] (4.5, 2) rectangle (6, 2.5); 
\filldraw [fill=myblue!50] (0, 1) rectangle (4.5, 2); 
\filldraw [fill=myblue!50] (5.25, 1) rectangle (6, 2); 
\filldraw [fill=myblue!50] (0, 0) rectangle (5.25, 1); 
\end{tikzpicture}
\hspace{1cm}
\begin{tikzpicture}[scale=0.37, every node/.style={scale=0.4}]
\draw[black, thick](0,0)--(6,0);
	\draw[black, thick] (0,0)--(0,8);
\draw[black, thick] (0,8)--(6,8);
\draw[black, thick](6,0)--(6,8);
\filldraw [fill=gray!10] (0, 6) rectangle (1.5, 8); 
\filldraw [fill=gray!10] (1.5,3) rectangle (3,6);
\filldraw [fill=gray!10] (3,2) rectangle (4.5, 3); 
\filldraw [fill=gray!10] (4.5, 0) rectangle (6, 2);
\filldraw [fill=teal!30] (0,0) rectangle (1.5, 6); 
\filldraw [fill=teal!30] (1.5, 0) rectangle (3, 3); 
\filldraw [fill=teal!30] (1.5, 6) rectangle (3, 8); 
\filldraw [fill=teal!30] (3,0) rectangle (4.5, 2); 
\filldraw [fill=teal!30] (3,3) rectangle (4.5, 8); 
\filldraw [fill=teal!30] (4.5, 2) rectangle (6, 8); 
\end{tikzpicture}
\hspace{1cm}
\begin{tikzpicture}[scale=0.37, every node/.style={scale=0.4}]
\draw[black, thick](0,0)--(6,0);
	\draw[black, thick] (0,0)--(0,8);
\draw[black, thick] (0,8)--(6,8);
\draw[black, thick](6,0)--(6,8);
\filldraw [fill=gray!10] (0,7) rectangle (.75,8); 
\filldraw [fill=gray!10] (.75,6) rectangle (1.5,7);
\filldraw [fill=gray!10] (1.5,4) rectangle (2.25,6);
\filldraw [fill=gray!10] (2.25,3) rectangle (3,4); 
\filldraw [fill=gray!10] (3,2.5) rectangle (3.75,3); 
\filldraw [fill=gray!10] (3.75,2) rectangle (4.5,2.5); 
\filldraw [fill=gray!10] (4.5,1) rectangle (5.25,2); 
\filldraw [fill=gray!10] (5.25,0) rectangle (6,1);
\filldraw [fill=teal!30] (0,0) rectangle (.75, 7);
\filldraw [fill=teal!30] (.75, 7) rectangle (1.5, 8); 
\filldraw [fill=teal!30] (.75, 0) rectangle (1.5, 6); 
\filldraw [fill=teal!30] (1.5, 6) rectangle (2.25, 8); 
\filldraw [fill=teal!30] (1.5,0) rectangle (2.25, 4); 
\filldraw [fill=teal!30] (2.25,4) rectangle (3, 8); 
\filldraw [fill=teal!30] (2.25, 0) rectangle (3, 3); 
\filldraw [fill=teal!30] (3, 3) rectangle (3.75, 8); 
\filldraw [fill=teal!30] (3,0) rectangle (3.75, 2.5); 
\filldraw [fill=teal!30] (3.75, 2.5) rectangle (4.5, 8); 
\filldraw [fill=teal!30] (3.75, 0) rectangle (4.5, 2); 
\filldraw [fill=teal!30] (4.5, 2) rectangle (5.25, 8); 
\filldraw [fill=teal!30] (4.5, 0) rectangle (5.25, 1); 
\filldraw [fill=teal!30] (5.25, 1) rectangle (6, 8); 
\end{tikzpicture}
\caption{Example HSS rows (blue, \emph{left}) and HSS columns (teal, \emph{right}) of $C$ are shown for two different levels of $\tree$. The gray rectangular blocks are excluded from the submatrices.}
\label{fig:HSSrows_cols}
\end{figure}

With these definitions in place, we can describe the rank structure of the transformed NUDFT matrix \smash{$C = VF^*$}, along with an algorithm that constructs the HSS generators. We then describe a solver based on these generators in~\Cref{sec:hier-semis-least}. 

\subsection{The rank structure of $C$}
\label{sec:rank-structure-c} The next theorem shows\footnote{The authors express gratitude to Bernhard Beckermann, who independently derived a proof similar in structure to this one in the context of Toeplitz matrices in 2018. Work related to this is found in~\cite[Ch. 4]{wilber2021computing}.} that  the $\epsilon$-ranks of the HSS columns and rows of $C$ only grow like \smash{$\mathcal{O}(\log( n) \log (1/\epsilon) )$}. The $\epsilon$-ranks of every off-diagonal HODLR block in $C$ behave similarly.

\begin{theorem} \label{thm:cHSS_col} Let \smash{$C \in \mathbb{C}^{m \times n}$} be as in~\cref{eq:diag_sylv}. 
If $A$ is an HSS column or row of $C$ (or a submatrix of such a column or row), then
\begin{equation}  \label{eq:erank_col}
\erank(A) \leq  \left \lceil \frac{2\log(4/\epsilon) \log( 4n )}{\pi^2} \right \rceil .
\end{equation}
\end{theorem}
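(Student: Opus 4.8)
The plan is to reduce the bound on an arbitrary HSS row or column (or a submatrix thereof) to a single application of \cref{lemma:arcs} via the displacement relation \cref{eq:diag_sylv}. First I would observe that it suffices to treat a submatrix $A = C_{JK}$ where $K = K_t$ is a contiguous block of column indices and $J \subseteq J_t^c$ (for an HSS column) or, symmetrically, $J = J_t$ and $K \subseteq K_t^c$ (for an HSS row); any submatrix of such a block is itself of this form with possibly smaller index sets, and $\erank$ is monotone under passing to submatrices, so the general claim follows from the case of the full HSS row/column. Restricting \cref{eq:diag_sylv} to rows $J$ and columns $K$ gives $\Gamma_J A - A \Lambda_K = (uw^*)_{JK}$, a Sylvester equation with displacement rank $1$, where $\Gamma_J$ and $\Lambda_K$ are diagonal (hence normal) with spectra on the unit circle.

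The key geometric point is that, because $K_t$ is contiguous and $J_t$ is exactly the union of the clusters $\cluster_\kappa$ for $\kappa \in K_t$, the eigenvalues $\{\omega^{2\kappa} : \kappa \in K_t\}$ of $\Lambda_{K_t}$ occupy one contiguous arc of the unit circle, while the nodes $\gamma_j$ for $j \in J_t^c$ all lie in the complementary arc — by the very definition of the clusters, every node indexed outside $J_t$ sits at angular distance at least $\pi/n$ (half a root-of-unity spacing) away from the arc spanned by $K_t$. Thus I can choose closed arcs $\mathcal{A}_K \supseteq \lambda(\Lambda_K)$ and $\mathcal{A}_J \supseteq \lambda(\Gamma_J)$ that are disjoint, with a guaranteed angular gap of at least $\pi/n$ on each side. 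Plugging into \cref{eq:zolobound_svs} with $\rho = 1$ and then \cref{lemma:arcs} gives $\sigma_{k+1}(A) \le 4\mu^{-2k}\|A\|_2$ with $\mu = \exp(\pi^2/(2\log(16\eta)))$; it then remains to bound the cross-ratio quantity $\eta$ from above by something like $4n$ (so that $16\eta$ is dominated by a constant times $n$), and to invert the inequality $4\mu^{-2k} \le \epsilon$ to read off $k \le \lceil 2\log(4/\epsilon)\log(4n)/\pi^2\rceil$, which is the asserted bound on $\erank(A)$.

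The main obstacle, and the only genuinely computational step, is the estimate on $\eta$. Writing the endpoints of the two arcs in terms of the relevant angles, $\eta$ is a product/quotient of four $|\sin(\cdot/2)|$ factors; the numerator factors involve the separations between "near" endpoints of the two arcs, which are at least $\pi/n$, while the denominator factors involve separations that are at most $2\pi$ in magnitude. The worst case is when $K_t$ is a single index and $J_t^c$ nearly wraps the whole circle (or vice versa), since then one of the small sine factors is as small as $\sin(\pi/(2n))$. Using $\sin\theta \ge (2/\pi)\theta$ for $\theta\in[0,\pi/2]$ on the small factors and $|\sin\theta|\le 1$ on the large ones, one gets $\eta \le (\text{const}) \cdot n$; the constants need to be tracked carefully to land exactly on $16\eta \le 4n$ (equivalently $\eta \le n/4$), which forces $\log(16\eta) \le \log(4n)$ and hence $\mu^{-2k} \le \exp(-k\pi^2/\log(4n))$. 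I would handle the two orientations (HSS column with $J\subseteq J_t^c$, HSS row with $K\subseteq K_t^c$) symmetrically by swapping the roles of the row and column index sets and of $\Gamma$ and $\Lambda$ in the Sylvester equation; no new idea is needed. Finally, I would note the degenerate cases — an empty cluster making $J$ or $K$ empty, or $A$ having fewer than $k+1$ columns — are vacuous since then $\erank(A)$ is already at most the number of columns.
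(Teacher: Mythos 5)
Your overall strategy is exactly the paper's: restrict the displacement relation \cref{eq:diag_sylv} to the index sets $J,K$ of the (sub)block, note the displacement rank is $1$, enclose $\lambda(\Gamma_J)$ and $\lambda(\Lambda_K)$ in disjoint arcs separated by the half-spacing $\pi/n$, and feed the result into \cref{eq:zolobound_svs} and \cref{lemma:arcs}. The reduction to a single contiguous column block with $J\subseteq J_t^c$, the monotonicity remark, and the treatment of degenerate cases are all fine.

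The step that would fail is your estimate of $\eta$. You have the roles of the four sine factors reversed: for the arcs in question, the \emph{denominator} of $\eta$ in \cref{lemma:arcs} contains the two small factors coming from the $\pi/(2n)$ half-gaps, while the numerator contains the two large factors (the near-endpoint separations give the denominator, not the numerator). Concretely, with $\arc_K$ spanning $[2\pi\kappa_1/n,\,2\pi\kappa_2/n]$ and $\arc_J$ the complementary arc padded by $\pi/n$ on each side, one computes
\[
\eta \;=\; \frac{\sin^2\!\big(\tfrac{\pi}{n}(\kappa_2-\kappa_1)+\tfrac{\pi}{2n}\big)}{\sin^2\!\big(\tfrac{\pi}{2n}\big)},
\]
which is $\geq 1$ and can be as large as $\approx 4n^2/\pi^2$ when $\kappa_2-\kappa_1\approx n/2$. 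So your target $16\eta\le 4n$ (i.e., $\eta\le n/4$) is false in general, and no amount of constant-tracking will rescue it. The correct and sufficient bound is $\eta\le n^2$ (numerator $\le 1$, and $\sin(\pi/(2n))\ge 1/n$), whence $\log(16\eta)\le\log(16n^2)=2\log(4n)$; this factor of $2$ is precisely where the $2$ in the numerator of \cref{eq:erank_col} comes from. Note also that your final inversion is internally inconsistent: from your claimed $4\mu^{-2k}\le 4\exp(-k\pi^2/\log(4n))\le\epsilon$ one would read off $k\ge\log(4/\epsilon)\log(4n)/\pi^2$ \emph{without} the factor of $2$, so you only land on the stated bound by accident. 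With $\eta\le n^2$ in place, the rest of your argument goes through and coincides with the paper's proof.
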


\begin{proof}
  The proofs for HSS rows and columns are similar, so we consider the case of an HSS column.
  Consider an HSS column \smash{$A^{\rm col}_t = C_{J_t^c K_t}$}, where
  \begin{equation*}
    K_t = \{ \kappa_1, \kappa_1 + 1,\ldots,\kappa_2 \} \quad \text{and} \quad J_t^c = \{ 1\le j \le m : j \notin \cluster_\kappa \text{ for every } \kappa_1\le \kappa\le \kappa_2 \}.
  \end{equation*}
  Let \smash{$A = C_{JK}$} be any submatrix of \smash{$A^{\rm col}_t$} where \smash{$K\subset K_t$} and \smash{$J\subset J_t^c$}. Let \smash{$\Gamma_J = \diag(\gamma_j )_{j \in J}$} and \smash{$\Lambda_K = \diag(\omega^{2\kappa})_{\kappa \in K}$}.
  By \cref{eq:diag_sylv}, we observe that 
  $$ \rank( \Gamma_J A - A\Lambda_K) = 1.$$
  From~\cref{eq:zolobound_svs}, we have that for $1 \leq k \leq \min(|J|,|K|) - 1$, 
  $$ \sigma_{k + 1}(A) \leq Z_k(\arc_J, \arc_K)\|A\|_2, $$
  where \smash{$\arc_J$} and \smash{$\arc_K$} are arcs on the unit circle that contain \smash{$\lambda(\Gamma_J)$} and \smash{$\lambda(\Lambda_K)$}, respectively:
  \begin{align*} 
    \arc_J &= \{e^{i\theta} : \pi(2\kappa_2+1)/n \le \theta \le \pi(2n+2\kappa_1-1)/n\},\\
    \arc_K &= \{e^{i\theta} : 2\pi \kappa_1/n \le \theta \le 2\pi \kappa_2/n \}.
  \end{align*}
  Applying \cref{lemma:arcs}, we have that
  \begin{equation} \label{eq:bound_svA}
    \sigma_{k + 1}(A) \leq 4 \mu^{-2k} \|A\|_2, \quad \mu = \exp\left( \frac{\pi^2}{2 \log (16 \eta) } \right), 
  \end{equation}
  where 
  \begin{equation} \label{eq:eta}
    \eta = \frac{ \sin^2\big(\tfrac{\pi}{n}(\kappa_2-\kappa_1)+\tfrac{\pi}{2n}\big) }{\sin^2\big( \tfrac{\pi}{2n}\big) } \le n^2.
  \end{equation}
  The upper bound on $\eta$ follows from the fact that  $\sin (\pi/2n) \geq 1/n$ for $n \ge 1$.
  Substituting \cref{eq:eta} into \cref{eq:bound_svA}, rearranging, and using the definition of $\epsilon$-rank establishes \cref{eq:erank_col}.
\end{proof}

We remark that in some extreme cases, an HSS row or column may have dimensions such that its mathematical rank  is less than the bound given in \cref{thm:cHSS_col} on the $\epsilon$-rank. Also, tighter bounds can be established for the submatrices of \smash{$A_t^{\rm col}$} (or \smash{$A_t^{\rm row}$}), including HODLR blocks of $C$, if one works directly with the parameters defining the particular submatrices. 

\subsection{Constructing the generators}
\label{sec:constr-gener}

\cref{thm:cHSS_col} establishes that the HSS rows and columns have low numerical rank, which implies that there exists a matrix $H\approx C$ for which each HSS row or column of $H$ is a low rank approximation to the corresponding submatrix of $C$.
We use a fADI-based interpolative decomposition method~\cite[Ch.4]{wilber2021computing} to cheaply construct the generators associated with $H$. We illustrate the idea using an HSS row. Let $t$ be a leaf node of $\tree$ and let \smash{$A_t^{\rm row}$} be the $t$th HSS row of $C$. Assume \smash{$A_t^{\rm row}$} has \smash{$m_t$} rows and \smash{$n\! - \!n_t$} columns, with \smash{$n_t \sim \log n$}. We assume at this level that  \smash{$m_t$} is $\mathcal{O}(\log m)$. Applying fADI directly (see \cref{sec:fadi_alg}), one would construct the low rank approximation \smash{$ZW^* \approx A_t^{\rm row}$}, where $Z$ is a small matrix of size \smash{$m_t \times k$},  and $W$ is tall and thin, of size \smash{$n\! - \!n_t \times k$}. However, it is wasteful and costly to construct $W$. Fortunately, fADI constructs $Z$ and $W$ independently~\cite[Ch.4]{wilber2021computing}, and there is no need to construct $W$ or touch the long dimension of \smash{$A_t^{\rm row}$} at all in order to construct $Z$. With $Z$ in hand, we do a column-pivoted QR decomposition of $Z^T$ to find 
$$ Z = \mathcal{P} \mathcal{R}^T\mathcal{Q}^T, \quad R^T = \twobyone{R_{\rm a}^T}{R_{\rm b}^T}$$
where $\mathcal{P}$ is a permutation matrix, \smash{$\mathcal{R}^T \in \mathbb{C}^{m_t \times k}$} is lower triangular with $k$ columns, \smash{$\mathcal{Q} \in \mathbb{C}^{k \times k}$} is unitary, and $R_{\rm a}^T$ is of size $k \times k$.  This decomposition is then used in a standard way to construct the one-sided interpolative decomposition~\cite{cheng2005compression,martinsson2019fast}
$$ A_t^{\rm row} = U_t A_t^{\rm row}(S_t^{\rm row},\; :\; ), \quad U_t = \mathcal{P} \twobyone{I_k}{R_{\rm b}^TR_{\rm a}^{-T}},$$
where \smash{$I_k$} is the $k \times k$ identity matrix and \smash{$S_t^{\rm row}$} indexes a subset of rows corresponding to the top $k$ rows selected by $\mathcal{P}$. We store the leaf generator basis matrix \smash{$U_t$} and the index set \smash{$S_t^{\rm row}$}. We call the subselected rows indexed by \smash{$S_{t}^{\rm row}$} \textit{basis rows}. A similar process is applied to HSS columns to construct each of the leaf generator basis matrices \smash{$V_t$} and select basis columns indexed by \smash{$S_t^{\rm col}$}. The sibling generators at this level are then simply given by \smash{$B_{\ell r} = C(S_\ell^{\rm row}, S_r^{\rm col})$}.

Once the leaf generators are constructed, we move one level up $\mathcal{T}$. For each parent $t \in \mathcal{T}$ at this level, we must construct the left and right parent-child generators \smash{$R_{\ell t}$} (\smash{$R_{r t}$} resp.) and \smash{$W_{\ell t}$} (\smash{$W_{r t}$} resp.), as well as sibling generators. 
We do this by applying fADI-based one-sided interpolative decompositions on the basis rows or basis columns of the appropriate children, and note that then number of basis rows (columns) is $< 2k$, where $k$ is the maximum rank of any HSS row or column at the leaf level.  In total, the interpolative fADI-based compression strategy constructs the generators for $H \approx C$ in \smash{$\mathcal{O} (m k^2) =\mathcal{O} (m \log^2(n) \log^2(1/\epsilon))$} operations.  With $H \approx C$ constructed, we have completed step 2 in \Cref{alg:generalsolver}. We note that while it is advantageous to leverage displacement structure via ADI, one could also use our bounds and apply a different core low rank approximation method to do the compression, such as one based on randomized numerical linear algebra~\cite{martinsson2011fast}.  We now turn to Step 3 in \cref{alg:generalsolver} and develop an efficient rank-structured solver for $Hy = b$.

\section{A hierarchically semiseparable least-squares solver}
\label{sec:hier-semis-least}
We now present an algorithm that approximately solves the problem
\begin{equation} \label{eq:LS}
  y = \argmin_{y\in \complex^n} \norm{Hy - b}_2^2,
\end{equation}
The developments in this section are general, applying to an arbitrary HSS matrix \smash{$H \in \complex^{m\times n}$} and vector \smash{$b \in \complex^m$}.
Throughout this section, let $k$ be the maximum rank of the HSS rows and columns of $H$.
For the inverse NUDFT problem, we demonstrated in \cref{sec:rank_structs} that one can construct an HSS matrix $H$ where $C \approx H$ with \smash{$k = \order(\log(n) \log(1/\epsilon))$}.
Up to the approximation $H\approx C$, the least-squares problem \cref{eq:LS} is equivalent to the inverse NUDFT problem \cref{eq:least_squares} with $y = Fx$.

\subsection{Solutions via the normal equations} \label{sec:normal}
A natural first approach to the least-squares problem \cref{eq:LS} is to work with the normal equations \smash{$(H^*H)y = H^*b$}, a linear system involving the square HSS matrix \smash{$H^*H$}. Indeed, the maximum rank of an HSS row or column in \smash{$H^*H$} is $2k$ and the HSS generators for \smash{$H^*H$} can be found in \smash{$\mathcal{O}(mk^2)$} operations. At that point, any number of fast direct solvers~\cite{rouet16dist,massei2020hm,xia10hier} can be applied to solve the normal equations. Relatedly, one may start with $Vx =b$ and observe that \smash{$V^*V$} is Toeplitz. Fast direct solvers for Toeplitz matrices are available~\cite{wilber2021computing, xia2012superfast}, so one must only implicitly form \smash{$V^*V$} in such a way that these solvers can be applied to solve~\cref{eq:Vandermonde_sys} via the normal equations. We make comparisons with such an algorithm in our numerical experiments in \cref{sec:experiments}.

The problem with these methods is that they square the condition number of the problem: solving the normal equations explicitly can result in lower accuracy than a dedicated least-squares solver. In our setting, we expect that $\kappa_2(H)$ may already be large and want to avoid exacerbating the issue. 

\subsection{A direct least-squares solver} \label{sec:solver}
Due to the above issues, we avoid the normal equations and work directly with $H$. Our algorithm is based on the URV factorization algorithm introduced in~\cite{xi2014superfast} for solving least-squares problems involving Toeplitz matrices. While that solver takes advantage of additional structure related to Toeplitz matrices, its URV factorization method is generalizable, and we present such a generalization here. The URV factorization is so-named because it decomposes a rectangular HSS matrix into the factors $U$, $R$, and $V$, with $U$ and $V$ being a product of unitary matrices and $R$ being upper triangular. These factors are never constructed or stored explicitly, but can instead be represented by sequences of sparse blocks in a hierarchical structure. The blocks are then used to reduce the linear system into a collection of small triangular systems that can be solved efficiently. The method is analogous to solution of least-squares problems by QR factorization, but is modified to exploit hierarchical structures. 

An important difference between the present work and~\cite{xi2014superfast} is the size of the blocks. In~\cite{xi2014superfast}, the rectangular diagonal blocks at each level of $\mathcal{T}$ are uniform or close to uniform in size. In our case, the block sizes are determined by the clustering patterns of the nodes. They can vary substantially from vertex to vertex in $\tree$, and short-fat rectangular diagonal blocks inevitably arise\footnote{The algorithm is breakdown-free as long as each block is of size \smash{$\tilde{m}_k \times n_t$}, with \smash{$n_t-k \le \tilde{m}_k$}. This is a significantly more mild condition than that all of the blocks are thin, i.e., \smash{$n_t \leq m_k$}).}.  While our method for constructing $H$ relies on the displacement structure of $C$, our solver is completely general and can be used for any HSS matrix.

An essential ingredient in our solver is the \emph{size reduction} step, introduced by~\cite{xi2014superfast}. The size reduction step applies unitary transformations to introduce additional zeros at each stage of the algorithm, controlling the \emph{rectangularity} (ratio of rows to columns) during recursive stages of the algorithm. We now describe the algorithm, beginning with URV factorization. 

\subsection{Factorization}
\label{sec:factorization}

The algorithm proceeds by traversing the HSS tree beginning with the leaves and continuing until the root is reached. Before diving into the algorithm, we establish some notation.
For each $t\in \tree$, the \emph{$t$th block row} are the rows of $H$ indexed by \smash{$J_t$} and the \emph{$t$th block column} are the columns indexed by \smash{$K_t$}.
Observe that, the $t$th block row contains the diagonal block \smash{$D_t$} and the HSS row \smash{$A^{\rm row}_t$} as complementary submatrices, and likewise for the $t$th block column.
For a leaf $t\in\tree$, we denote \smash{$m_t$} (resp.\ \smash{$n_t$}) as the number of rows (resp.\ columns) in the $t$th block row (resp.\ column).
As a notational convenience, we assume all generators (except \smash{$D_t$}) have exactly $k$ columns; this restriction is not necessary in practice.

Begin at each leaf $t\in \tree$. 
The first step is the \emph{size reduction} step, designed to control the number of rows relative to columns in subsequent steps of the algorithm. The size reduction step introduces zeros at the bottom of \smash{$U_t$} and \smash{$D_t$} by means of a QR decomposition
\begin{equation*}
  \onebytwo{U_t}{D_t} = \Omega_t \begin{blockarray}{ccc} \matrixsize{k} & \matrixsize{n_t} & \\ \begin{block}{[cc]c} \tilde{U}_t & \tilde{D}_t & \matrixsize{k+n_t} \\ 0 & 0 & \matrixsize{m_t - k - n_t} \\\end{block} \end{blockarray}, 
\end{equation*}
where \smash{$\Omega_t\in\complex^{(k+n_t)\times (k+n_t)}$} is unitary. 
When we multiply the $t$th block row of $H$ by \smash{$\Omega^*_t$}, it zeroes out many of the rows in that block, allowing us to ignore these rows in future steps of the algorithm. 
This size reduction step only needs to be performed if \smash{$\onebytwo{U_t}{D_t}$} has many more rows than columns; using the factor $6$ as cutoff produces good results in our experiments.
We denote the number of rows of \smash{$\tilde{U}_t,\tilde{D}_t$} as \smash{$\tilde{m}_t$}.

We now work with the reduced leaf generators \smash{$\tilde{U}_t$, $\tilde{D}_t$}, and \smash{$V_t$} to compute the URV factorization as in~\cite{xi2014superfast}.  Useful diagrams related to this process in the Toeplitz setting are given in~\cite[Fig.~2.3]{xi2014superfast}. The first step introduces zeros into \smash{$V_t$} by means of a \emph{reversed QR factorization}:
\begin{equation*}
  V_t = P_t
  \begin{blockarray}{cc}
    \matrixsize{n_t} \\
    \begin{block}{[c]c}
      0 & \matrixsize{n_t - k} \\
      \overline{V}_t & \matrixsize{k} \\
    \end{block}
  \end{blockarray},
\end{equation*}
where \smash{$\overline{V}_t = \left[\opentriangle\right]$} is \emph{antitriangular}, \smash{$(\overline{V}_t)_{j\kappa} = 0$} if $j+\kappa < k+1$, and \smash{$P_t \in \complex^{n_t\times n_t}$} is unitary.
Such a factorization can be obtained by computing a QR factorization of $V_t$, reversing the order of the columns of the first factor, and reversing the order of the rows of the second factor.
Multiplying the $t$th block column by \smash{$P_t$} has the effect of zeroing out many columns of the HSS column \smash{$A^{\rm col}_t$}.
The modified diagonal block \smash{$\hat{D}_t = \tilde{D}_t P_t$} is dense, so we introduce as many zeros as possible by a partial triangularization.
Partition the diagonal block as:
\begin{equation*}
  \hat{D}_t = \tilde{D}_t P_t =
  \begin{blockarray}{ccc}
    \matrixsize{n_t-k} & \matrixsize{k} \\
    \begin{block}{[cc]c}
      \hat{D}_{t;11} & \hat{D}_{t;12} & \matrixsize{n_t - k} \\
      \hat{D}_{t;21} & \hat{D}_{t;22} & \matrixsize{\tilde{m}_t - n_t + k}\\
    \end{block}
  \end{blockarray}.
\end{equation*}

Now, introduce zeros in \smash{$\hat{D}_t$} by means of a QR factorization:
\begin{equation*}
  \twobyone{\hat{D}_{t;11}}{\hat{D}_{t;21}} = Q_t
  \begin{blockarray}{cc}
    \matrixsize{n_t - k}\\
    \begin{block}{[c]c}
      \overline{D}_{t;11} & \matrixsize{n_t - k} \\
      0 & \matrixsize{\tilde{m}_t - n_t + k}\\ 
    \end{block}
  \end{blockarray},
\end{equation*}
where $Q_t\in \complex^{\tilde{m}_t\times \tilde{m}_t}$ is unitary.
Now multiply \smash{$\hat{D}_t$} by \smash{$Q_t^*$}. This results in a modified diagonal block
\begin{equation*}
  \overline{D}_t = Q_t^* \hat{D}_t =   \begin{blockarray}{ccc}
    \matrixsize{n_t-k} & \matrixsize{k} \\
    \begin{block}{[cc]c}
      \overline{D}_{t;11} & \overline{D}_{t;12} & \matrixsize{n_t - k} \\
      0 & \overline{D}_{t;22} & \matrixsize{\tilde{m}_t - n_t + k} \\
    \end{block}
  \end{blockarray}
\end{equation*}
and modified leaf generator matrix
\begin{equation*}
  \overline{U}_t= Q_t^* \tilde{U}_t = \begin{blockarray}{cc}
    \matrixsize{k} \\
    \begin{block}{[c]c}
      \overline{U}_{t;1} & \matrixsize{n_t - k} \\
      \overline{U}_{t;2} & \matrixsize{\tilde{m}_t - n_t + k}\\
    \end{block}
  \end{blockarray}.
\end{equation*}

The new diagonal block \smash{$\overline{D}_t$} is now almost triangular. 
The sticking point is the \hbox{$(2,2)$-block} \smash{$\overline{D}_{t;22}$}. If we truncate the $t$th block row so that the rows coinciding with \smash{$\overline{D}_{t;22}$} are not included, then the remaining block row is upper triangular. This completes the factorization at the leaf level.  We then handle \smash{$\overline{D}_{t;22}$} recursively. 

For a non-leaf $t\in\tree$ with children $\ell$ and $r$, define
\begin{align*}
  D_t' = \twobytwo{\overline{D}_{\ell;22}}{\overline{U}_{\ell;2}B_{\ell,r}\overline{V}_r^*}{\overline{U}_{r;2}B_{r,\ell}\overline{V}_\ell^*}{\overline{D}_{\ell;22}}, \quad
  U_t' = \twobyone{\overline{U}_{\ell;2}R_{\ell,t}}{\overline{U}_{r;2}R_{r,t}}, \quad
  V_t' = \twobyone{\overline{V}_{\ell}W_{\ell,t}}{\overline{V}_{r}W_{r,t}},
\end{align*}
Here, \smash{$B_{\ell,r}$, $B_{r,\ell}$, $R_{\ell,t}$, $R_{r,t}$, $W_{\ell,t}$}, and \smash{$W_{r,t}$} refer to the sibling and parent-child generators for the HSS matrix $H$, defined in \cref{sec:HSS}.
If $t$ is not the root, we perform the same sequence of operations we performed for the leaf nodes, using \smash{$D_t'$, $U_t'$}, and \smash{$V_t'$} in place of \smash{$D_t$}, \smash{$U_t$}, and \smash{$V_t$} using \smash{$m_t$} and \smash{$n_t$} to denote the number of rows and columns of \smash{$D_t'$}, respectively.
If $t$ is the root, we only need to perform a QR decomposition
\begin{equation*}
  D_0' = Q_0
  \begin{blockarray}{cc}
    \matrixsize{m_0} \\
    \begin{block}{[c]c}
      \overline{D}_{0;11} & \matrixsize{m_0} \\
      0 & \matrixsize{n_0 - m_0} \\
    \end{block}
  \end{blockarray},
\end{equation*}
where \smash{$Q_0 \in \complex^{m_0\times m_0}$} is unitary.
The matrix \smash{$D_0'$} is small, so this QR decomposition is inexpensive.
A summary description is shown in \cref{alg:factor}. Once the blocks comprising the URV factors are computed, we are ready to solve the least-squares problem. 

\begin{algorithm}[t!]
  \caption{URV factorization for HSS matrix $H$} \label{alg:factor}
  \begin{algorithmic}[1]
    \Procedure{URV}{$t$} \Comment{$t\in\tree$ a node in the HSS tree}
    \If{$t$ not leaf}
    \State $\ell \leftarrow \text{$t$.left}$, $r \leftarrow \text{$t$.right}$
    \State Perform recursive calls \Call{URV}{$\ell$} and \Call{URV}{$r$}
    \State $D_t \leftarrow \twobytwo{\overline{D}_{\ell;22}}{\overline{U}_{\ell;2}B_{\ell,r}\overline{V}_r^*}{\overline{U}_{r;2}B_{r,\ell}\overline{V}_\ell^*}{\overline{D}_{\ell;22}}, \: U_t \leftarrow \twobyone{\overline{U}_{\ell;2}R_{\ell,p}}{\overline{U}_{r;2}R_{r,p}},\: V_t \leftarrow \twobyone{\overline{V}_{\ell}W_{\ell,p}}{\overline{W}_{r}R_{r,p}}$
    \EndIf
    \If{$t$ is root}
    \State $\left(Q_t, \twobyone{\overline{D}_{t;11}}{0}\right) \leftarrow \Call{QR}{D_t}$
    \Else
    \If{$\Call{Size}{\onebytwo{U_t}{D_t},2} \ge \texttt{srf} \cdot \Call{Size}{U_t,1}$} \Comment{We used $\texttt{srf} = 6$}
    \State $\left( \Omega_t, \twobytwo{\tilde{U}_t}{\tilde{D}_t}{0}{0}\right) \leftarrow \Call{QR}{\onebytwo{U_t}{D_t}}$
    \Else
    \State $\Omega_t \leftarrow \begin{bmatrix} {\:} \end{bmatrix}$, $\tilde{D}_t \leftarrow D_t$, and $\tilde{U}_t \leftarrow U_t$
    \EndIf
    \State $(P_t,\mathtt{tmp}) \leftarrow \Call{QR}{V_t}$
    \State $P_t = P_t(:,\mathtt{end}:-1:1)$, $\overline{V}_t \leftarrow \mathtt{tmp}(\Call{Size}{V_t,2}:-1:1,:)$ \Comment{Reversed QR}
    \State $\twobytwo{\hat{D}_{t;11}}{\hat{D}_{t;12}}{\hat{D}_{t;21}}{\hat{D}_{t;22}} \leftarrow \tilde{D}_tP_t$
    \State $\left(Q_t,\twobyone{\overline{D}_{t;11}}{0}\right) \leftarrow \textsc{QR}\left(\twobyone{\hat{D}_{t;11}}{\hat{D}_{t;21}}\right)$
    \State $\twobyone{\overline{D}_{t;12}}{\overline{D}_{t;22}} \leftarrow Q_t^* \twobyone{\hat{D}_{t;12}}{\hat{D}_{t;22}},\quad \twobyone{\overline{U}_{t;1}}{\overline{U}_{t;2}} \leftarrow Q_t^* \tilde{U}_t$
  
    \EndIf
    \EndProcedure
  \end{algorithmic}
\end{algorithm}

\subsection{Solution}
\label{sec:solution}
We break the procedure for solving the least-squares problem into two large steps. The first involves transforming the right-hand side, and the the second uses backsubstitution to solve a sequence of hierarchical triangular systems.
A summary of the procedure is provided in \cref{alg:solve}.

\begin{algorithm}[t]
  \caption{Least squares solution for linear system $Hy = b$ from URV factorization} \label{alg:solve}
  \begin{algorithmic}[1]
    \Procedure{URVSolve}{$b$}
    \State \Call{SolvePart1}{0, $b$} \Comment{Begin recursion at root node $0$}
    \State \Return \Call{SolvePart2}{0}
    \EndProcedure
  \end{algorithmic}
  \vspace{0.5em}
  
  \begin{algorithmic}[1]
    \Procedure{SolvePart1}{$t$, $b$}
    \If{$t$ not leaf}
    \State $\ell \leftarrow t\text{.left}$, $r\leftarrow t\text{.right}$, and partition $b = \twobyone{b_\ell}{b_r}$
    \State Perform recursive calls \Call{SolvePart1}{$\ell$, $b_\ell$} and \Call{SolvePart1}{$r$, $b_r$}
    \State $b \leftarrow \twobyone{c_{\ell;2}}{c_{r;2}}$
    \EndIf
    \If{$t$ not root and $\Omega_t \ne \begin{bmatrix} {\:} \end{bmatrix}$}
    \State $b \leftarrow \Omega_t^* b, \: b \leftarrow b(1:\Call{Size}{Q_t,1})$
    \EndIf
    \State $\twobyone{c_{t;1}}{c_{t;2}} \leftarrow Q_t^* b$
    \EndProcedure
  \end{algorithmic}
  \vspace{0.5em}
  
  \begin{algorithmic}[1]
    \Procedure{SolvePart2}{$t$}
    \If{$t$ is root}
    \State $y_t \leftarrow \overline{D}_{t;11}^{-1} c_{t;1}$
    \Else
    \State $w_{t;1} \leftarrow \overline{D}_{t;11}^{-1}(c_{t;1} - \overline{D}_{t;12}y_{t;2} - \overline{U}_{t;1} z_t)$
    \State $y_t \leftarrow P_t \twobyone{y_{t;1}}{y_{t;2}}$
    \EndIf
    \If{$t$ not leaf}
    \State $\ell \leftarrow t\text{.left}$, $r\leftarrow t\text{.right}$, and partition $y_t = \twobyone{w_{\ell;2}}{w_{r;2}}$
    \State $z_\ell \leftarrow B_{\ell,r} \overline{V}_r^* w_{r;2} + R_{\ell,t} z_t$, $z_r \leftarrow B_{r,\ell}\overline{V}_\ell^* w_{\ell;2} + R_{r,t} z_t$
    \State $y \leftarrow \twobyone{\Call{SolvePart2}{\ell}}{\Call{SolvePart2}{r}}$
    \EndIf
    \State \Return $y$
    \EndProcedure
  \end{algorithmic}
\end{algorithm}

\subsubsection{Apply unitary operations $Q_t^*$ and $\Omega_t^*$ to the right-hand side} The URV triangularization process modifies the rows of the linear system, and we must account for that in the right-hand side.
For each \emph{leaf} $t$, let \smash{$b_t$} denote the block of the right-hand side of $b$ consisting of those rows associated with $t\in \tree$.
Beginning with each leaf node $t$, we define
\begin{equation*}
  c_t = \twobytwo{Q_t^*}{0}{0}{I} \Omega_t^* b_t =
  \begin{blockarray}{cc}
    \begin{block}{[c]c}
      c_{t;1} & \matrixsize{n_t - k} \\ c_{t;2} & \matrixsize{\tilde{m}_t - n_t + k} \\ c_{t;3} & \matrixsize{m_t - \tilde{m}_t} \\
    \end{block}
  \end{blockarray}.
\end{equation*}
If the size reduction step was omitted, then one only needs to multiply by \smash{$Q_t^*$}:
\begin{equation*}
  c_t = Q_t^* b_t =   \begin{blockarray}{cc}
    \begin{block}{[c]c}
      c_{t;1} & \matrixsize{n_t - k} \\ c_{t;2} & \matrixsize{\tilde{m}_t - n_t + k} \\
    \end{block}
  \end{blockarray}.
\end{equation*}

Next, consider each parent $t$ with children $\ell$ and $r$.
If $t$ is not the root, then we set \smash{$b_t = \onebytwo{c_{\ell;2}^T}{c_{r;2}^T}^T$} and apply the same procedure as we did the leaves.
If $t$ is the root, we simply set \smash{$c_t = \onebytwo{c_{\ell;2}^T}{c_{r;2}^T}^T$}.

\subsubsection{Solving the hierarchical triangular system} \label{sec:solvestep}
We now perform a top-down traversal of $\tree$.
Beginning with the root $0$, solve the triangular system
\begin{equation*}
  \overline{D}_{0;11} y_0 = c_{0;1}.
\end{equation*}
For the children $\ell$ and $r$ of $0$, partition the solution as \smash{$y_0 = \onebytwo{w_{\ell;2}^T}{w_{r;2}^T}^T$} for \smash{$w_{\ell;2},w_{r;2} \in \complex^k$} and set \smash{$z_\ell = B_{\ell,r} \overline{V}_r^* w_{r;2}$}, \smash{$z_r = B_{r,\ell}\overline{V}_\ell^* w_{\ell;2}$}.

For a non-parent node $t$, we solve the triangular system
\begin{equation*}
  \overline{D}_{t;11} w_{t;1} = c_{t;1} - \overline{D}_{t;12} w_{t;2} - \overline{U}_{t;1} z_t
\end{equation*}
and define
\begin{equation*}
  y_t = P_t \twobyone{w_{t;1}}{w_{t;2}}.
\end{equation*}
If $t$ is a non-leaf with children $\ell$ and $r$, then partition \smash{$y_t = \onebytwo{w_{\ell;2}^T}{w_{r;2}^T}^T$} for \smash{$w_{\ell;2},w_{r;2} \in \complex^k$} and set
\begin{equation*}
  z_\ell = B_{\ell,r} \overline{V}_r^* w_{r;2} + R_{\ell,t} z_t, \quad z_r = B_{r,\ell}\overline{V}_\ell^* w_{\ell;2} + R_{r,t} z_t.
\end{equation*}
Conclude by assembling $y$ from all \smash{$y_t$} at leaf nodes $t$.
Pseudocode for the entire solution step is provided in \cref{alg:solve}.

\subsection{Computational costs}
\label{sec:operation-count}

To get a representative operation count for the URV factorization, we make the assumption that  all the diagonal blocks \smash{$D_t$} at the leaf level have $2k$ columns and $2(m/n)k$ rows and that all non-diagonal HSS generators have exactly $k$ columns (in practice, the number of columns for each generator will vary and is determined adaptively according to \cref{thm:cHSS_col}). This gives an overall cost for the URV factorization step of
\begin{equation} \label{eq:op_counts}
    32mk^2 + 124 nk^2 \text{ operations}.
\end{equation}
This is on the same asymptotic order as the HSS factorization step (Step 2 in \cref{alg:generalsolver}), but involves larger constants and is the most expensive step in the algorithm. Once the URV factorization is computed, each subsequent triangular solve is cheaper, requiring just $\order(mk)$ operations. From \cref{thm:cHSS_col}, we have that $k = \mathcal{O}(\log n \log 1/\epsilon)$. The conclusion is that whenever all  clusters associated with $V$ obey \smash{$\mathcal{O}\left( |\cluster_\kappa | \right) = \mathcal{O}(k)$ }, the system $Vx = b$ can be solved with time complexity \smash{$\mathcal{O}\left( (m +n) \log^2 (n) \log^2(1/\epsilon) \right)$}.  Even if this condition is violated, the solver is relatively insensitive to the distribution of the nodes. In comparison, all the strategies we tested in \cref{sec:experiments} fail if there are substantial gaps between nodes (e.g., as in Grid  4 in \cref{fig:gridsandnodes}).

\subsection{Numerical stability}
While we have not undertaken a stability analysis of the exact procedure shown here, a full stability analysis for the structure-exploiting URV factorization method of \cite{xi2014superfast} comes equipped with a backward stability guarantee, up to a modest growth factor.
Given the design of the URV factorization algorithm using unitary operations, our numerical experience, and the results of \cite{xi2014superfast}, we conjecture that the procedure from this paper is also backward stable.

\subsection{Implementation details} To implement our ADI-based HSS factorization and also the least-squares solver, we have made use of the \texttt{hm-toolbox} package~\cite{massei2020hm} in MATLAB, modifying it to handle nonuniform partitioning and make use of ADI-based compression strategies. We have also  implemented the URV solver, which in principle could be applied to any rectangular HSS matrix. All of our code is available at \url{https://github.com/heatherw3521/NUDFT}, along with experiments and basic examples. While the underlying code is relatively complicated, using the solver is straightforward. It can be applied in MATLAB with just one line of code: 

\vspace{.1cm}

\begin{verbatim}
           x = INUDFT(nodes, n, b, 'tol', tol);
\end{verbatim}

\vspace{.1cm}

\noindent where \texttt{nodes} is the vector of nodes $\gamma_j = e^{-2\pi i p_j}$
defining the Vandermonde matrix, \texttt{n} is the number of frequencies (dimension of the solution vector \texttt{x}), \texttt{b} is the right hand side vector, and \texttt{tol} is a tolerance parameter that controls the accuracy of the HSS approximation (roughly equivalent to $\epsilon$). Our package also includes a factorization command that creates a sparse representation of the URV factors, which can then be applied efficiently to blocks of right-hand sides via \cref{alg:solve} with a specialized solve command. 

\section{Numerical experiments} \label{sec:experiments}
We compare the performance of the proposed ADI-based inverse NUDFT solver with several other solvers.  As our experiments reflect, the optimal method for a given problem will depend on the conditioning of $V$ (which in turn is controlled by the distribution of its nodes) as well as the number of right-hand sides, and the size of the problem. A summary for practitioners is as follows: 
\boxedtext{
    \textbf{Which solver should you use?}
    In our experiments, we find that ADI-based inverse NUDFT solver is consistently fast and accurate, even for ill-conditioned problems or with difficult distributions of nodes.
    We found two scenarios when other solvers may be preferable to ours:
\begin{enumerate}
\item For single or few right-hand side problems with mildly irregular nodes, we recommend iterative methods applied to the normal equations or adjoint normal equations (see \Cref{fig:itervsdir,fig:relreserr}). 
\item For small-to-moderate-scale problems with many right-hand sides and only mildly irregular nodes, we recommend the Kircheis--Potts method \cite{kircheis2019direct}.
\end{enumerate}
For problems not meeting either of these two scenarios (i.e., moderate-to-large scale with multiple right-hand sides or any problems with highly irregular or gappy nodes), we recommend the proposed ADI-based least-squares solver or a direct Toeplitz solver on the normal equations if conditioning permits (see \Cref{fig:relreserr,fig:direct_solver_comparisons}). 
}

A couple of caveats: while we include some commentary on preconditioning, we only engage with some of the large literature on preconditioning for Toeplitz systems of linear equations~\cite{chan2007introduction}. We also do not any include commentary related to parallelization. 
We discuss some aspects of
noise in the context of signal reconstruction in \cref{sec:imaging}, but there
remains more to investigate in future work.

The MATLAB/Octave codes to reproduce all experiments is found at \url{https://github.com/heatherw3521/NUDFT}.  

In most experiments, test signals are created by generating random complex-valued coefficients (\smash{$\{x_k\}_{k=1}^n$}) in~\cref{eq:Vandermonde_sys} whose real and imaginary parts are independently
drawn from the normal distribution of zero mean and unit variance.
This is a simple model for complex-valued signals of bandwidth $n$, and hence
(when interpreted as symmetrized frequency indices $-n/2\le k < n/2$ as in
\cref{r:conventions}) bandlimit $n/2$.
We have found that scaling with a $k$-dependent decay
does not qualitatively affect the results. 
In \cref{sec:imaging} only we instead choose coefficients to give a known 1D ``image.''
We usually choose \smash{$\{b_j\}_{j = 1}^m$} so that \cref{eq:Vandermonde_sys} is consistent, except in \cref{sec:imaging} where the second test adds noise to the data.

We consider the following four schemes for generating the locations
\smash{$\{ p_j \}_{j=1}^m \subset [0,1]$},
  which are designed to lead to increasingly poorly conditioned problems.
  \cref{fig:gridsandnodes} shows examples of the resulting
	  unit circle nodes \smash{$\gamma_j = e^{-2\pi i p_j}$.}
\begin{itemize}
\item[Grid 1:]
  We set  \smash{$p_j = ((m-j+1) + \theta \psi_j)/m$}, where \smash{$\psi_j$} is drawn from a uniform distribution on $[-1, 1]$, giving a jittered
  regular grid with
$\theta$ the so-called ``jitter parameter". When $0 \leq \theta < 1/4$,
  $V$ is proven to be well-conditioned in the square case $m=n$
 
\cite{austin2017trigonometric, yu2023stability}. For our
overdetermined
experiments with $m/n \approx 2$,
we choose $\theta = 1/2$ and observe very good conditioning (\smash{$\kappa_2(V) \approx 2.0$}, independent of $n$).

\item[Grid 2:]
  We use Chebyshev points of the second kind for $(0,1)$, which are given by the formula
  $p_j = (1+\cos \frac{\pi(j-1)}{m-1} )/2$.
  These are also Clenshaw--Curtis quadrature nodes.
  
  The resulting nodes of $V$ exhibit strong clustering along the circle toward the point $z=1$,
  and the first and last nodes coincide.
  The largest gap \smash{$p_{m/2}-p_{m/2+1}$} is close to $\pi/2m$.

  The condition that this gap be narrower than half a wavelength at the
  Nyquist frequency $n/2$ leads to the condition $m/n > \pi/2$;
  when this holds we observe \smash{$\kappa_2(V)$} small
  (of order $10$, with very weak growth in $n$).
  Conversely, for $m/n < \pi/2$ we see exponentially poor conditioning. 
  
\item[Grid 3:] The \smash{$p_j$} are independently uniform random on $[0,1)$,
  then sorted in descending order.
  This leads to random gaps, with an expected largest gap of
  \smash{${\mathcal O}(m^{-1} \log m)$}.
  For $m/n\approx 2$,
  we observe \smash{$\kappa_2(V)$} large and growing with $n$,
  of order \smash{$10^4$} for $n \approx 2000$ (the largest SVD we
  have computed). The singular value spectrum appears to
  densely fill the interval \smash{$[0,\sigma_{\rm max}(V)]$} somewhat uniformly.

\item[Grid 4:] The \smash{ $p_j$} are independently uniform random on $[0, 1-\delta]$, again sorted, where
  the enforced gap size is $\delta = 8/n$, corresponding to 4 wavelengths at the Nyquist
  frequency of $n/2$.
  The gap results in significantly higher \smash{$\kappa_2(V)$} than for Grid 3,
  namely in the range \smash{$10^6$} to \smash{$10^8$} for the largest SVDs we computed. Apart from the few smallest singular values,
  the overall singular value spectrum is similar to that of Grid 3.
\end{itemize}

\begin{figure}[t!] 
  \centering
    \begin{minipage}{.9\textwidth} 
    \centering
    \begin{overpic}[width=.23\textwidth]{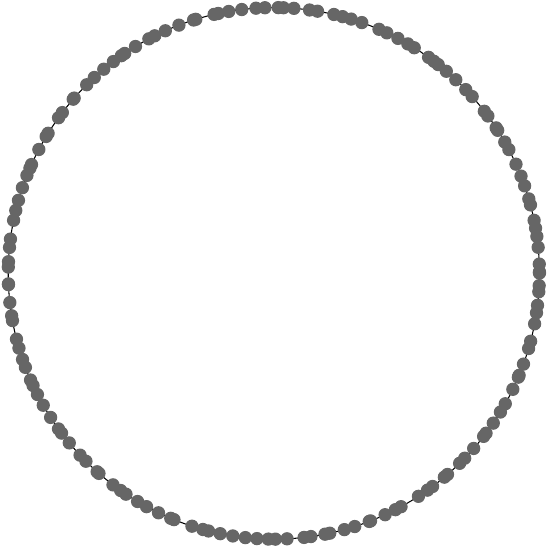} 
\put(35,45){\small{Grid 1}}   
\put(30,33){\small{(jittered)}} 
    \end{overpic}
    \hspace{.1cm}
    \begin{overpic}[width=.23\textwidth]{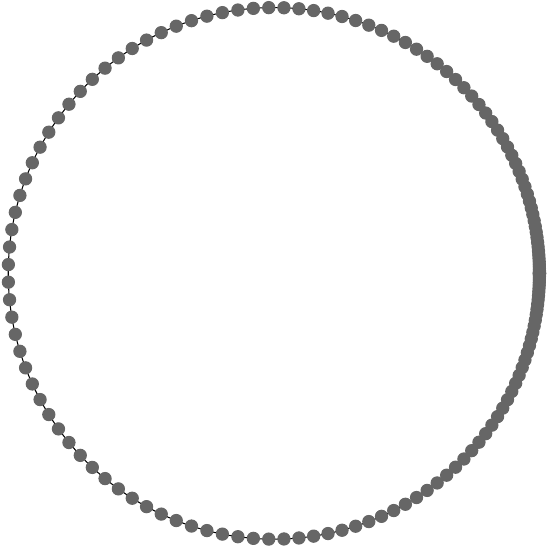} 
    \put(35,45){\small{Grid 2}}   
\put(24,33){\small{(quadrature)}}  
    \end{overpic}    
    \hspace{.1cm}
    \begin{overpic}[width=.23\textwidth]{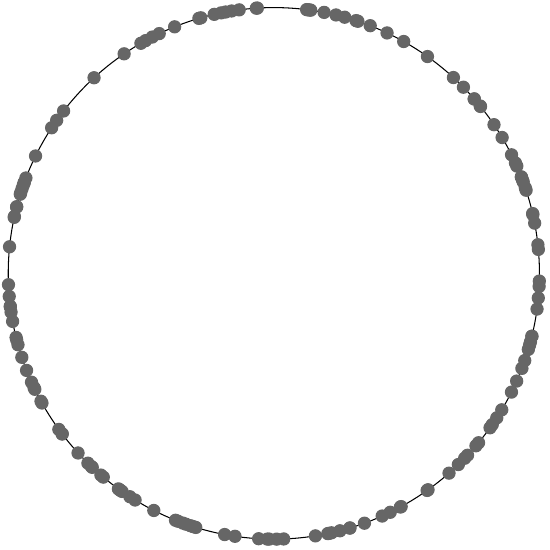} 
    \put(35,45){\small{Grid 3}}   
\put(24,33){\small{(iid random)}}       
    \end{overpic}
    \hspace{.1cm}
    \begin{overpic}[width=.23\textwidth]{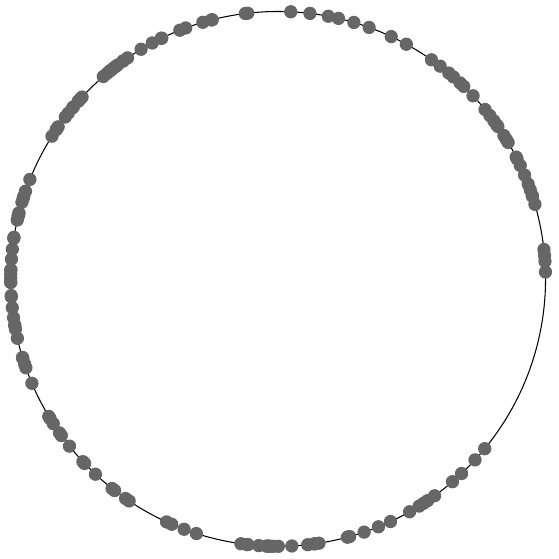} 
    \put(35,45){\small{Grid 4}}   
\put(20,33){\small{(random+gap)}}    
    \end{overpic}
      \vspace{.1cm}
  \end{minipage}
    \caption{Node sets \smash{$\{\gamma_j\}_{j=1}^m$} from the four kinds of
      nonuniform distributions tested in \cref{sec:experiments}.
      The first two give \smash{$\kappa_2(V)={\mathcal O}(1)$}, while for the last two
      \smash{$\kappa_2(V)$} is large ($ \gg 10^4$ in large problems).
      For visibility, a very small mode number $m = 150$ is shown. In Grid 4
      the gap is four wavelengths at the Nyquist frequency $n/2$, choosing $n=m/2$.}
  \label{fig:gridsandnodes}
\end{figure}

\subsection{Iterative methods vs.\ ADI-based direct solver} \label{sec:iterative} 
We compare timings\footnote{For experiments in this section, all methods were tested using a Dell laptop computer with a 12th Gen Intel Core i7-1280P 1.8 GHz processor and 32 GB RAM.} and relative residual error across several iterative methods in \Cref{fig:itervsdir}.
The algorithms are implemented in MATLAB R2023a,
where each iteration uses either MATLAB's FFT library, or the MATLAB
wrapper for the state of the art FINUFFT library \cite{barnett2019parallel}
for NUDFT transforms (multiplications by $V$ or \smash{$V^*$}).
We compare against five iterative methods we call CG nor, PCG nor Strang, FP adj sinc, CG adj, and PCG adj sinc; see \cref{sec:iterative-description} for a description of these methods.

The performance of these methods is compared to the proposed
ADI-based least-squares direct solver in \Cref{fig:itervsdir},
for a large problem of size $524, 288 \times 262, 144$
(oversampling ratio $m/n=2$).
The iterative methods terminate by either achieving a relative residual of \smash{$10^{-7}$} or performing a maximum number of iterations ($8,000$ for methods on adjoint equations, $10,000$ for methods on normal equations).

The convergence of the iterative methods depends in general on the
singular value distribution of $V$ and on the right-hand side
\cite{axelsson00,carson24};
a well-known bound on its geometric rate is controlled by
the condition number \smash{$\kappa_2(V)$} \cite{saadbook}.
The latter diverges as the nodes become increasingly irregular.
Grids 1 and 2 do not cause much trouble (iteration counts are in the range 10--100), but all of the iterative methods struggle when when Grids 3 and 4 are used. For example, CG on the normal equations  requires 5734 and 5416 iterations, respectively, and CG on the adjoint normal equations requires 7788 and 7769 iterations, respectively. 
The two types of preconditioning tested do not seem to help here.
In contrast, the ADI-based least-squares solver is agnostic to the grid choice, and always takes around 22 seconds to execute for this problem size.
Here, the HSS rank parameter is set as \smash{$\epsilon = 10^{-10}$}.

\begin{figure}[t!]  
  \centering
    \begin{minipage}{.44\textwidth} 
    \centering
    \begin{overpic}[width=\textwidth]{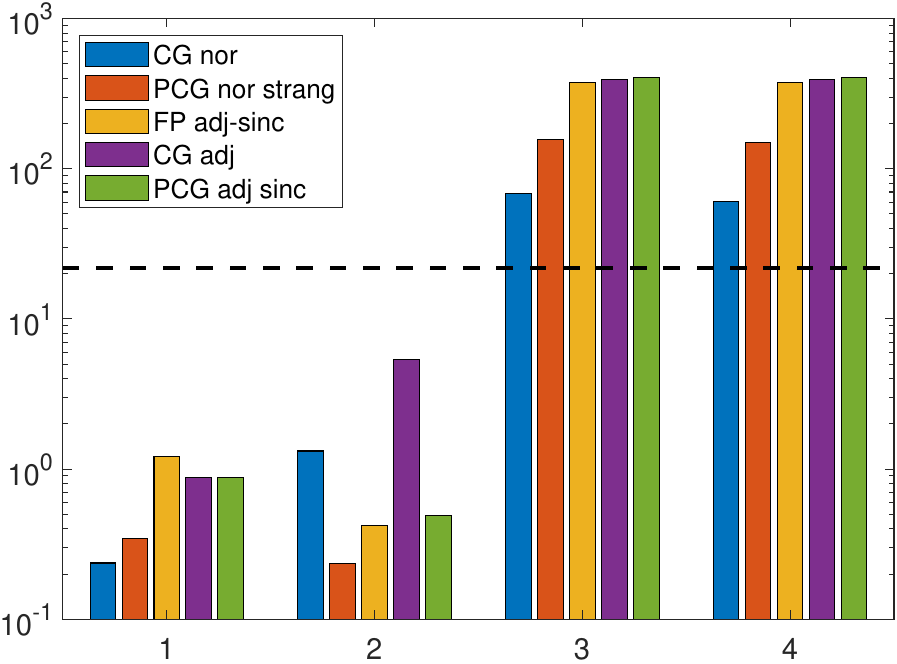} 
    \put(-5,29){\rotatebox{90}{\small{\textit{seconds}}}}
    \put(40,-5){\small{\textit{Grid choice}}}
     \put(12,41){\tiny{ADI-based LS ($\approx 22$ sec.)}}
    \end{overpic}
  \end{minipage}
  \hspace{.5cm}
  \begin{minipage}{.44\textwidth} 
    \centering
    \begin{overpic}[width=\textwidth]{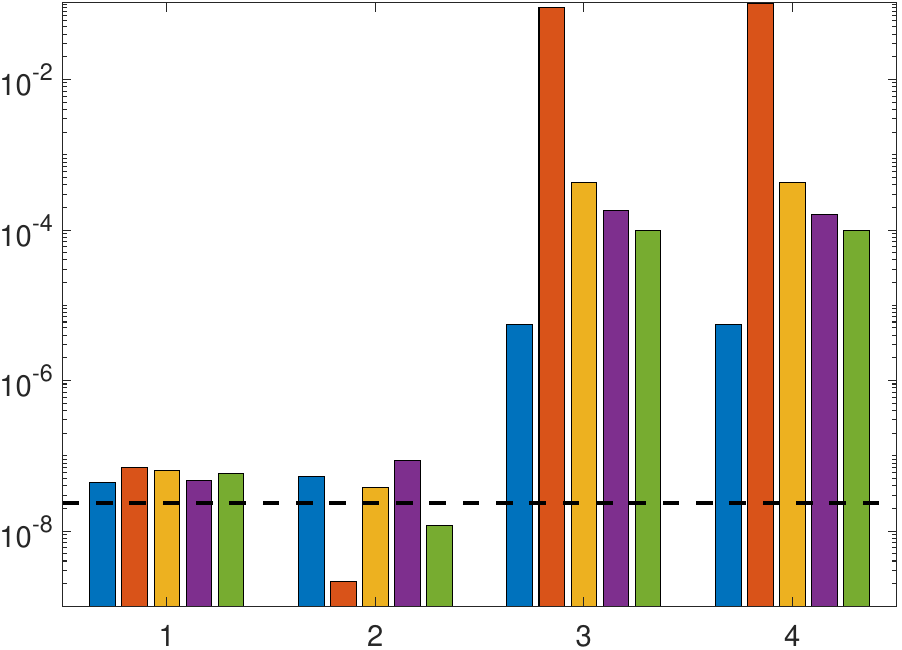}
       \put(-5,18){\rotatebox{90}{\small{\textit{relative residual}}}}
       \put(100,16){\rotatebox{60}{\tiny{ADI-based LS}}}
        \put(40,-5){\small{\textit{Grid choice}}}
    \end{overpic}
  \end{minipage}
  \caption{The bar plots compare the performances of various iterative methods with the
    proposed
    ADI-based least-squares solver for solving $Vx = b$ for the four different node sets (grids) of \cref{fig:gridsandnodes}.
    $V$ is of size $m\times n = 524, 288 \times 262, 144$.
    \emph{Left:} The wall clock time in seconds achieved by each method is plotted on a log scale for each grid choice. The ADI-based LS method (dotted black line) is insensitive to the grid choice and performs similarly (about 22 seconds) for all problems of this size. \emph{Right:} The relative 2-norm
    residual (data error) achieved by each method is plotted on a log scale for each grid choice. The ADI-based least-squares method achieves roughly the same relative residual for every problem. The HSS rank parameter is \smash{$ \epsilon = 10^{-10}$}.
    In most of the iterative methods for grids 3 and 4, a maximum
    iteration count around \smash{$10^4$} was reached (see \cref{sec:iterative}).
  }
  \label{fig:itervsdir}
\end{figure}

An informative comparison of the methods is given in \cref{fig:relreserr}, where we plot the relative residual against the execution time per right-hand-side
required to achieve it\footnote{We omit CG with Strang preconditioning from these results as it performed much more poorly than other methods.}.   Unlike the iterative methods, the ADI-based least-squares solver gains efficiency as the number of right-hand sides is increased. For this test, $V$ is of size $29,492 \times 16,384$
(so oversampling $m/n\approx 1.8$),
with Grid 3 used for sample locations. Various stopping residuals are explored for the iterative methods%
\footnote{Interestingly, the residual error for the iterative methods appears to decay only algebraically (rather than geometrically) with iteration count. This type of {\em sublinear} CG convergence has been observed
\cite[Fig.~1]{carson24} and analyzed \cite[Sec.~3.2]{axelsson00}
when the eigenvalue spectrum densely fills an interval that includes zero,
which seems to be the case for Grid 3. We leave a full understanding for future work.}.

One conclusion we draw is that if 20 or more RHS are needed, the ADI-based
solver beats all iterative methods, even at low accuracy.

\subsection{Comparisons with direct methods} \label{sec:Direct}
Here, we compare against some alternative direct solvers, including the method from Kircheis and Potts in~\cite{kircheis2019direct} and an ADI-based Toeplitz solver we have implemented and applied to solve the normal equations. Overall, we find that the specialized ADI-based solvers are much faster and more robust than other direct solvers currently available. For those familiar with hierarchical solvers, it may be surprising that the ADI-based methods perform as well as they do on small- and moderate-sized problems. The computational and memory costs for hierarchical methods often involve large constants, making them useful primarily in the large-scale setting. Our HSS construction method is leaner than many hierarchical solvers because we heavily leverage the displacement structure of $V$ (e.g., to avoid forming or storing $V$ or $C$ outright, to supply explicit a priori bounds for low rank approximations, and to apply an unusually fast low rank approximation method).

\begin{figure}[t!]   
  \centering
  \begin{minipage}{.70\textwidth} 
    \centering
    \begin{overpic}[width=\textwidth]{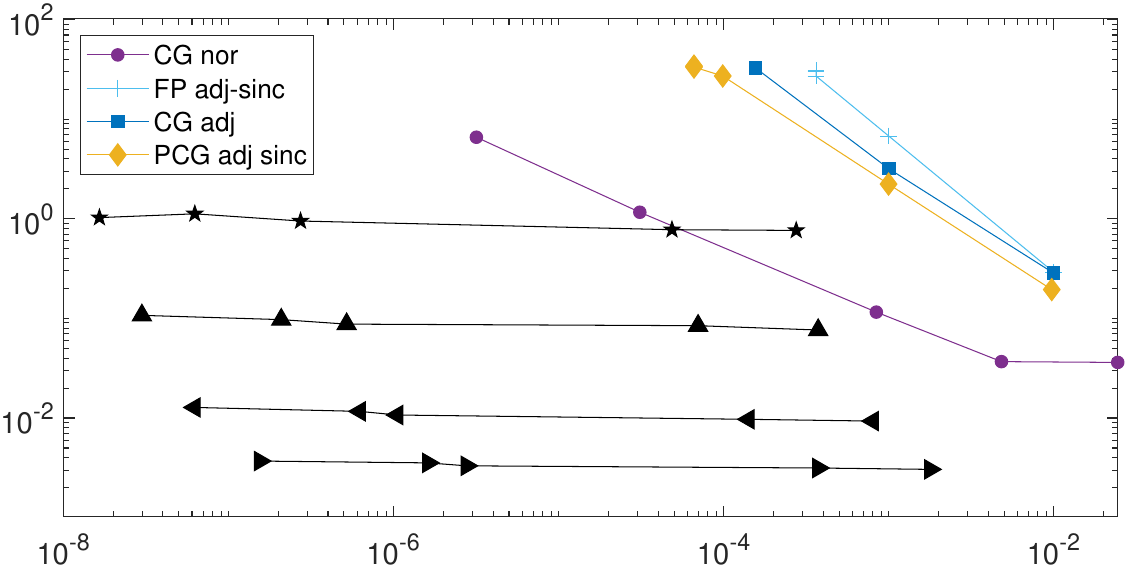}
     \put(-4,16){\rotatebox{90}{\small{\textit{seconds per RHS}}}}
    \put(41,-3){\rotatebox{0}{\small{\textit{relative residual error}}}}
     \put(23,32){\tiny{ADI-based LS, 1 RHS}}
     \put(23,24){\tiny{ADI-based LS, 10 RHS}}
     \put(23,16){\tiny{ADI-based LS, 100 RHS}}
     \put(23,10){\tiny{ADI-based LS, 1000 RHS}}
    \end{overpic}
  \end{minipage}
  \vspace{1ex}
  \caption{The performances of several methods are compared by plotting the relative residual error achieved against the execution time required to achieve it on a log-log scale. Execution times indicate the time per right-hand side,
    which for the the iterative methods only depends on accuracy (residual).
    The proposed ADI-based direct method has a run-time almost
    independent of accuracy, and shows an efficiency per RHS growing
    linearly up to several hundred RHS.
    In these problems, Grid 3 is used to generate the $29,492 \times 16,384$ matrix $V$.} 
  \label{fig:relreserr}
\end{figure}

In the left panel of \cref{fig:direct_solver_comparisons}, we compare the performance of our method against the Kircheis--Potts method~\cite{kircheis2019direct} for a single right-hand side\footnote{This test was performed on a 2013 Mac Pro with a 2.7 GHz 12-Core Intel Xeon E5 processor and 64 GB 1866 MHz DDR3 SDRAM}. The Kircheis--Potts method is designed to be effective in settings where the nodes are not too irregularly clustered and there are many right-hand sides. The method has a \smash{$\mathcal{O}(m^2 +n^2)$} precomputation step that can be prohibitive in the large-scale setting. We also compare against dense direct solution (MATLAB's \texttt{mldivide}, i.e., ``backslash''), as well as the fast ADI-based Toeplitz solver from~\cite[Ch.~4]{wilber2021computing} applied to the normal equations \cref{nor}. The latter is the natural Toeplitz analog of our direct least-squares method for Vandermonde matrices. As shown in~\cite[Ch.~4]{wilber2021computing}, the performance of this Toeplitz solver is comparable to related implementations~\cite{xia2012superfast} that use randomized numerical linear algebra for HSS construction. 

For all of these methods, the cost for solving the system with multiple right-hand sides should be considered in stages. In the first stage, a factorization is computed and stored. For our solvers, this is the hierarchical ULV/URV factorization. For backslash, one could store the factors from QR decomposition. For the Kircheis--Potts method, the initial factorization involves constructing a sparse and diagonal matrix that, along with a DFT matrix, approximately diagonalize the system.  The second stage involves applying these factors to the right-hand sides. We supply comparisons for the two fastest methods in \cref{fig:direct_solver_comparisons}. Kircheis--Potts requires the application of a sparse matrix-vector product plus and an FFT for each right-hand side. Our method also includes an FFT for each right-hand side (step 4 of \Cref{alg:generalsolver}), plus the hierarchical unitary multiple-triangular solve step in \cref{sec:solvestep}. 
The solve step applied to multiple right-hand sides involves many dense matrix-matrix products. Since these are handled at the BLAS3 level, we see tremendous practical speedup per right-hand-side at this stage of the computation.

As we see in the left panel of \cref{fig:direct_solver_comparisons}, the  hierarchical Toeplitz and Vandermonde solvers are the fastest over the other solvers by a factor of 100 when accounting for both stages. For small values of $n$, our Vandermonde solver outperforms the Toeplitz solver by a small amount, transitioning to slightly underperforming the Toeplitz solver for large values of $n$. We believe that this performance inversion is due to the Vandermonde solver's somewhat higher memory requirements.
The right panel of \cref{fig:direct_solver_comparisons} shows that the second stage of the Kircheis--Potts solver is about $20\times$ faster than that of the hierarchical Vandermonde solver, showing that the former is very effective for medium-sized problems where the precomputation cost is affordable and the distribution of nodes is not too irregular.

Note that, for this test using Grid 1 (jittered), CG nor was the best iterative
method (verified on the left side of \cref{fig:itervsdir}).
Thus we did not include the other four iterative methods in these tests.

We also do not include comparisons against \smash{$\mathcal{O}(n^2)$} Toeplitz solvers that exploit the Gohberg--Semen\c{c}ul formulae, since in initial
tests using the NFFT implementation \cite{keiner2009using} we found it uncompetitive even at small scale, having a runtime exceeding that of the Kircheis--Potts method
 \cite{kircheis2019direct}.

\begin{figure}[t!] 
  \centering
  \begin{minipage}{.48\textwidth} 
    \centering
    \begin{overpic}[width=\textwidth]{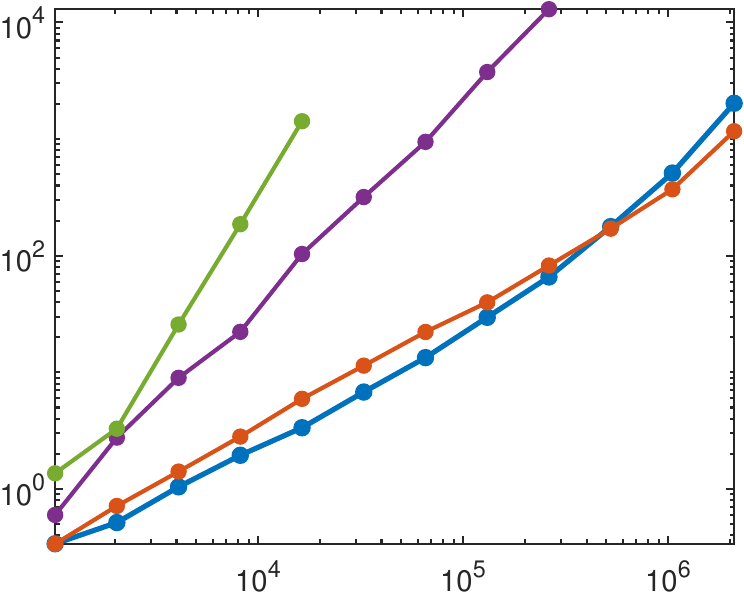}
      \put(21,42){\rotatebox{56}{\small{backslash}}}
      \put(35,45){\rotatebox{44}{\small{Kircheis--Potts}}}
      \put(30,23){\rotatebox{30}{\small{nor. eqns + ADI-Toeplitz}}}
      \put(33,13){\rotatebox{29}{\small{ADI-based LS solver}}}
      \put(53, -3){\small{\textit{n}}}
      \put(-4, 18){\rotatebox{90}{\small{\textit{execution time (seconds)}}}}
    \end{overpic}
  \end{minipage}
  \hspace{.2cm}
  \begin{minipage}{.48\textwidth} 
    \centering
    \begin{overpic}[width=\textwidth]{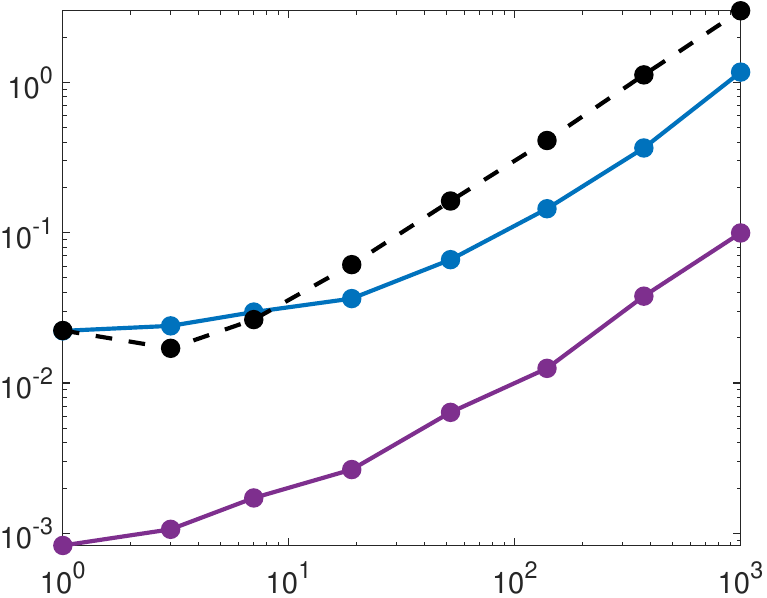}
      \put(32, -5){\small{\textit{\# of right-hand sides}}}
      \put(33,25){\rotatebox{28}{\small{ADI-based LS (solve-step only)}}}
      \put(38,7){\rotatebox{28}{\small{Kircheis--Potts (solve-step only)}}}
      \put(40,44){\rotatebox{35}{\small{CG nor}}}
    \end{overpic}
  \end{minipage}
  \vspace{1ex}
  \caption{\emph{Left:} The wall clock time required to solve $Vx =b$, where $V$ is a NUDFT matrix of dimension $2n \times n$ is plotted against $n$ on a log-log scale.
    We compare the proposed ADI-based LS solver (red) with several other direct methods, including MATLAB's dense backslash (green), the overdetermined solver from Kircheis and Potts in~\cite{kircheis2019direct} (purple), and the fast direct ADI-based Toeplitz solver~\cite[Ch. 4]{wilber2021computing} applied to the normal equations (red). and the HSS rank parameter is set to \smash{$\epsilon = 10^{-10}$} in the ADI-based solvers.
 Grid 1 was used, so $V$ is well-conditioned (this gives CG nor a strong advantage!).
 \emph{Right:} The execution time in seconds is plotted against the number of right-hand sides in the problem $VX = B$ on a log-log scale for three different solvers, where $V$ is of size $16384 \times 8192$. We exclude factorization steps from the timings.
 The well-conditioned $V$ is a best-case scenario for CG, which gets slower as $V$ becomes more ill-conditioned. The performance of the ADI-based LS method does not depend on the conditioning of $V$.}
  \label{fig:direct_solver_comparisons}
\end{figure}

\subsection{An ``image'' reconstruction example} \label{sec:imaging}
One of the major applications for non-uniform discrete Fourier transforms is signal reconstruction problems.
In our previous experiments, we have considered the ideal case where the signal has little structure (all Fourier coefficients with the same variance),
while the measurements \smash{$b_j$} are free of noise.
In practice, measurements are always corrupted with noise,
so that $Vx=b$ becomes inconsistent.
Yet a true least-squares solution as in \cref{eq:Vandermonde_sys} can
effectively average this noise if $V$ is well conditioned.
If not, noise will usually be grossly amplified, demanding
regularization.  We give some discussion on this topic in~\cref{sec:extensions}, but it is an area for future work. 

For now, we make some preliminary observations using a toy 1D reconstruction task. In \cref{fig:Imaging}, we choose a discontinuous signal function $f(2\pi p)$ on $0\le p < 1$, where
\[
f(x) = \left\{\begin{array}{ll}
\sin(8x^2),& x<3\\
1,& \frac{7}{2}<x<\frac{9}{2}\\
\max(0,1-2|x-\frac{11}{2}|), & \mbox{otherwise}
\end{array}
\right.
\]
contains an oscillatory chirp, a top-hat and a triangle part.
Its exact Fourier coefficients $$c_k = \int_0^1 e^{2\pi i kp} f(2\pi p) \mathrm{d}p$$
for $-n/2\le k < n/2$
are approximated using equispaced quadrature on $10n$ nodes.
We then set \smash{$x_k = \exp(-0.5(7k/n)^2) c_k$} which low-pass filters the signal
to make it approximately bandlimited (here the Gaussian factor is about
0.002 at the maximum frequency $\pm n/2$). These $\{x_k\}_{-n/2}^{n/2-1}$
are now are taken as the true Fourier coefficients.
For a given sample grid,
$b_j = \sum_{k=-n/2}^{n/2-1} \gamma_j^k x_k$ is evaluated using a type-II. For these experiments, we choose $n = 2048$ and take $m = 3687$ samples of the signal, so $m/n \approx 1.8$. 
In order to apply NUDFT type-II solvers as defined in the rest of this paper
with frequencies $0,\dots,n-1$ offset by $n/2$ from this,
prephased data \smash{$\gamma_j^{n/2} b_j$} must be used (\cref{r:conventions}).
We solve the least-squares inverse type-II NUDFT problem using two iterative methods (CG on the normal and the adjoint normal equations) and one direct method (ADI-based least-squares solver). This constructs an approximation to the coefficients in~\cref{eq:Vandermonde_sys},
from which the signal (Fourier series) is plotted
on a denser grid of $6n$ equally-spaced points using a padded plain FFT.

We consider two problematic settings. First, we use Grid 4, which 
results in multiple sizeable gaps between sampling points (one forced, others due to random uniform sampling). This corresponds to large condition number, with \smash{$\kappa_2(V) \approx 10^7$}. 
Thus, one would not be able to recover the ground truth in a realistic (noisy) setting.
However, with noise-free data we see that the reconstruction via the
ADI-based direct solver is visually indistinguishable from the true signal,
while the two iterative solvers have $\bigO(1)$ errors at certain problematic locations
(large gaps).
This reflects the smaller residual error that the direct solver achieves.

The upper right panel of \Cref{fig:Imaging} is a zoomed-in display of a small portion of these reconstructions that clearly illustrates these behaviors. While we do not display results for other direct solvers (e.g., the Toeplitz direct solver on the normal equations and backslash in MATLAB), we observe that reconstructions from these methods look similar to the ADI-based result.
This illustrates an accuracy advantage that fast direct solvers can have in ill-conditioned settings.

In the lower left panel of \Cref{fig:Imaging}, we consider another problematic setting. Here, we sample the signal using Grid 1 (jittered), so that $V$ is well-conditioned, but then add iid Gaussian noise of standard deviation \smash{$10^{-2}$} to both the real and complex parts of the observations (visible as blue dots in lower-right panel), to give the right-hand side vector in $Vx = b$.
The oversampling helps
average away (reduce) some noise in the solutions from CG on the normal equations and the ADI-based direct solver.
Moreover these two solutions are extremely close, differing by about \smash{$10^{-5}$} in
the \smash{$L^2$}-norm.
In contrast, the CG on adjoint normal equations is visibly much noisier.

\begin{figure}[t!] 
  \centering
  \begin{minipage}{.74\textwidth} 
    \centering
    \begin{overpic}[width = \textwidth]{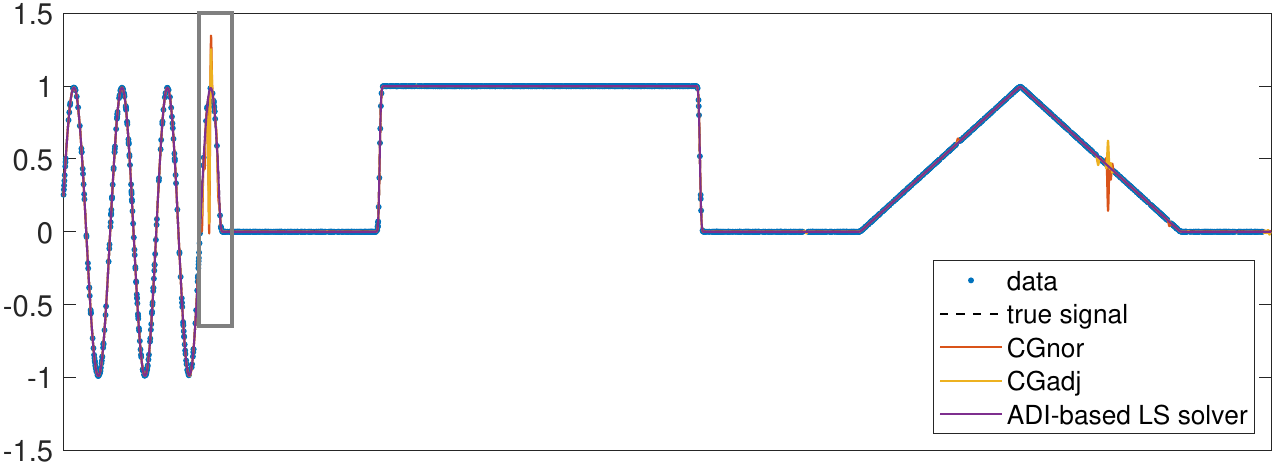}
   \put(20, 38){\small{noise-free samples from a gappy sampling (Grid 4)}}
    \end{overpic}
  \end{minipage}
  \begin{minipage}{.20\textwidth} 
    \centering
    \begin{overpic}[width=\textwidth]{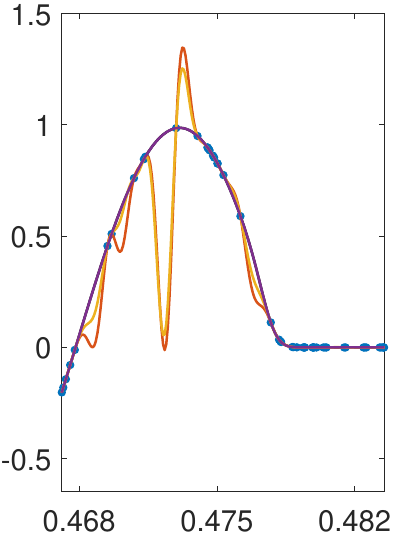}
    \end{overpic}
    \end{minipage}
    
\vspace{.22cm}

      \begin{minipage}{.74\textwidth} 
    \centering
    \begin{overpic}[width = \textwidth]{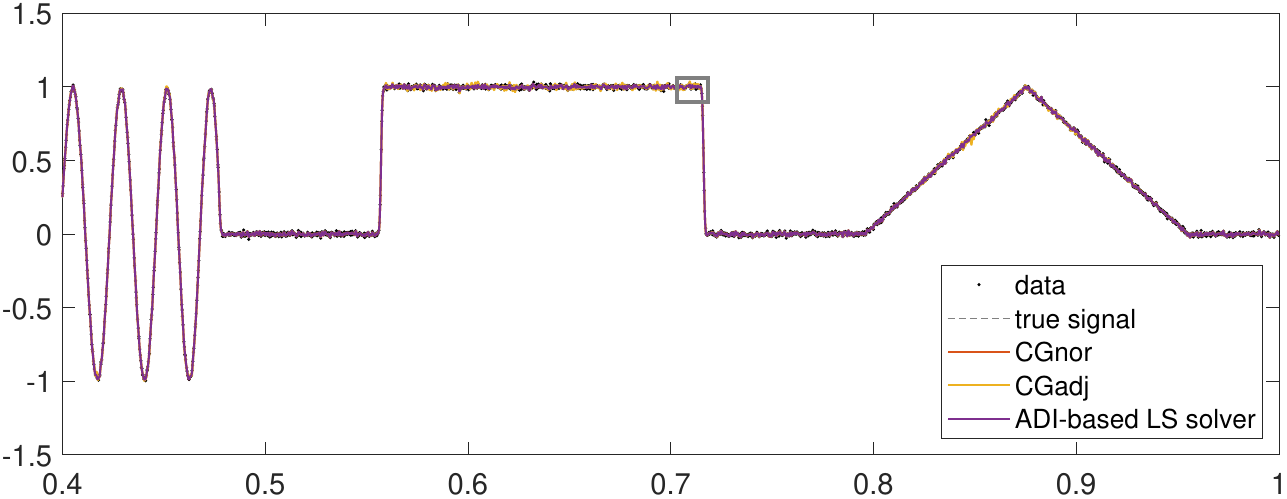}
    \put(21, 39){\small{noisy samples from a jittered sampling (Grid 1)}}
    \end{overpic}
  \end{minipage}
  \begin{minipage}{.20\textwidth} 
    \centering
    \begin{overpic}[width=\textwidth]{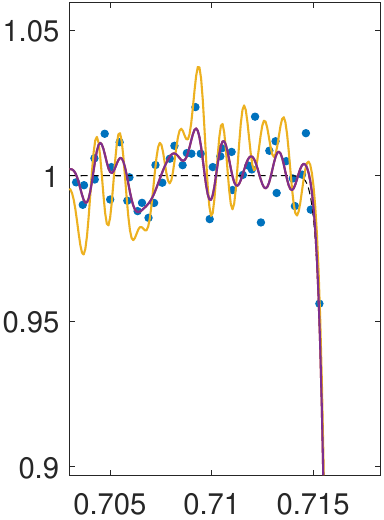}
    \end{overpic}
  \end{minipage}
  \caption{1D ``image'' reconstruction example with $n=2048$ coefficients and $m=3687$.  
     Samples are shown only on  a subset of the domain where \smash{$0.4<p_m<1$}.
      \emph{Upper:} The real part of several reconstructions of a signal are plotted on a fine regular grid, along with the true underlying signal (obscured as it closely coincides with the ADI-based reconstruction). The blue dots indicate samples.
      The sample locations are given by Grid 4 (iid random with gap). The grey rectangle highlights a portion of the signal that is zoomed in on to the right.
      \emph{Lower:} Reconstructions for the same signal but using Grid 1 (jittered),
      with data corrupted by additive noise.  The grey rectangle highlights a portion of the signal zoomed in on to the right,
      where one sees that the CG nor and ADI-based reconstructions visually coincide.}
  \label{fig:Imaging}
  \end{figure}

\section{Extensions} \label{sec:extensions}
Various extensions and future investigations arise from this work.
Some extensions are immediate, while some are more involved.
We anticipate that the proposed method can be extended quite
easily in the following three directions.

1) Regularization of \cref{eq:least_squares} by addition of a term
$\lambda x^* L x$ is crucial in various ill-conditioned inverse problems.
At least in the case $L=I$,
we believe that recasting as an expanded $(m+n)\times(m+n)$ system
followed by a block perfect shuffle would allow the use of a fast direct
ULV HSS solver \cite{chandrasekaran2006fast}.

2) Inverting the type-I NUDFT (see \cref{r:types} for its interpretation)
has applications including superresolution imaging \cite{batenkov2019spectral}.
  Since the system matrix is now $V^*$,
  one may exploit that $\overline{F} V^*$ is Cauchy-like
  and use similar ADI-based strategies to approximate it with an HSS matrix.
In the overdetermined case $n \geq m$, the solver in \cref{sec:solver} could be applied
directly.  Underdetermined systems with regularization are also of practical interest, but more work is required to adapt our methods to that setting. A semidirect method for the underdetermined case is included in~\cite{ho2014fast}.

3) Our technique can be applied to solve $Xy=f$ for any matrix $X$ that satisfies a Sylvester matrix equation $AX-XB = F$, where (i) the rank of $F$ is small (e.g., 1 or 2), and (ii) the matrices $A$ and $B$ can be diagonalized via fast transforms. This includes Toeplitz, Hankel, and Toeplitz+Hankel matrices, as well as nonuniformly sampled Chebyshev--Vandermonde matrices~\cite{koev1999matrices}.

\subsection{The 2D setting} A more challenging question is how the method can be extended to the 2D and 3D cases, which are arguably the most important for applications.  The type-II 2D NUDFT considers $m$ ordered pairs of sample locations,  \smash{$\{ (t_1, s_1), \ldots (t_m, s_m)\} \subset [0, 1]^2$}, with corresponding samples \smash{$\{b_1, \ldots, b_m\}$}. The goal is to recover the coefficient matrix $X$ in
\begin{equation} \label{eq:2Dprob}
  b_j = \sum_{k = 1}^{n_1} \sum_{k' = 1}^{n_2}
  e^{-2 \pi i \left((k-1) t_{j} + (k'-1)s_{j} \right)} X_{kk'}, \qquad 1 \leq j \leq m.
\end{equation}
If the sample locations \smash{$\{(t_j, s_j)\}$} form a tensor grid on $[0,1]^2$, then a fast 2D solver follows directly from repeated applications of the 1D solver. However, this is not the typical case. In general,  
we can express~\cref{eq:2Dprob} as an $m \times n_1n_2$ linear system involving a multivariate Vandermonde matrix $V$. One can also view $V$ as the face-splitting product~\cite{slyusar1999family} \smash{$V_\eta \bullet V_\xi$}, where \smash{$V_\eta$} is an $m \times n_1$ Vandermonde matrix, and \smash{$V_\xi$} is an $m \times n_2$ Vandermonde matrix. There are various ways that one can transform $V$ into a matrix involving Cauchy-like structures, but there is no obvious way to transform $V$ into a matrix that is well-approximated by an HSS matrix, since many of the Cauchy-like blocks are not numerically of low rank.  We are confident that in 2D a \smash{$\mathcal{O}(m^{3/2})$} solver can be achieved using nested dissection~\cite{martinsson2019fast}, with further details to be supplied in later work.

\section*{Acknowledgments}

The authors are grateful for discussions with Dima Batenkov, Leslie Greengard, Alex Townsend, and Joel Tropp, as well as support to attend a workshop at Casa Matemática Oaxaca (Banff International Research Station) where some early work was done.
We thank Joel Tropp for access to the machine used to make the left panel of \cref{fig:direct_solver_comparisons}. We also thank the anonymous referees for suggestions that improved the manuscript. The Flatiron Institute is a division of the Simons Foundation.

\appendix

\section{Description of iterative methods}
\label{sec:iterative-description}
Here is a description of the iterative methods to which we compare in \cref{sec:iterative}:

\begin{enumerate}
\item \textbf{CG nor.} The conjugate gradient method is applied to the normal equations
\be
V^*V x = V^* b,
\label{nor}
\ee
where the Toeplitz matrix
with entries \smash{$(V^* V)_{kk'} = \sum_{j=1}^m \gamma_j^{k'-k}$}
is applied via discrete nonperiodic convolution using the padded%
\footnote{We pad to $2n$ since it is much more efficient for FFTs
than the minimum padding length of $2n-1$.}
FFT.
Here, the Toeplitz vector is
precomputed by applying the type-I NUDFT to the vector of ones.
CG on the normal equations minimizes the residual 2-norm
as in \eqref{eq:least_squares}, and is thus also known as CGNR
\cite[\S8.3]{saadbook}.

\item \textbf{PCG nor Strang.} We apply Strang
  preconditioning~\cite[\S2.1]{chan2007introduction} to the previous
  CG method for
  \cref{nor}:
this uses a $n\times n$ circulant
preconditioner constructed from the central $n$ diagonals of the Toeplitz
matrix. Such a preconditioner is generally effective for
diagonally-dominant matrices, which does not
generally hold for $V^*V$.

\item \textbf{FP adj sinc.}
We include a fixed-point iteration
found to be effective in the MRI setting by Inati et al.\ \cite{inati07draft}
to solve the adjoint
(``second kind'') $m\times m$ normal equations
\be
V V^* z = b,
\label{adj}
\ee
returning \smash{$x=V^*z$}. Assuming that $W$ is an approximate inverse of \smash{$VV^*$},
then from the current guess $z_k$ the residual
\smash{$r_k = VV^* z_k - b$} is computed,
from which the next iterate is
\smash{$z_{k+1} = z_k - Wr_k$}. This iteration starts with \smash{$z_0=b$},
and stops when \smash{$\|r_k\|\le\epsilon\|b\|$}.
Turning to $W$, a good diagonal choice is
given by the Frobenius norm minimization
\be
w = \argmin_{w\in \R^m, \, W = \diag\{w\}} \|I - WVV^*\|^2_{\rm F},
\label{wfrob}
\ee
motivated by the fact that the convergence rate of the
above iteration is bounded by the spectral norm \smash{$\|I - WVV^*\|$},
yet the Frobenius norm is much simpler to minimize than the spectral norm.
Specifically, the exact solution to \cref{wfrob}
may easily be found to be the have elements
\be
w_j = \frac{(VV^*)_{jj}}{(VV^*VV^*)_{jj}}
=
\frac{n}{\sum_{j'=1}^m|(VV^*)_{jj'}|^2}, \qquad j=1,\dots,m,
\label{wopt}
\ee
which may be interpreted as the discrete analog
of ``sinc$^2$ weights'' in the MRI literature
\cite{choimunson,greengard2006fast},
since the Dirichlet kernel entries obey
$$\lim_{n\to\infty} (VV^*)_{jj'} = n \sinc n\pi (p_j-p_{j'});$$
also see \cite[Rmk.~3.16]{kircheis2023direct}.
An efficient computation of $w$
groups terms along each diagonal $q=k-k'$ as follows:
\[
(VV^*VV^*)_{jj} =
\sum_{k,k'=1}^n \gamma_j^{k-k'} \sum_{j'=1}^m \gamma_{j'}^{k'-k}
=
\sum_{|q|<n} \gamma_j^q \cdot (n\!-\!|q|) \sum_{j'=1}^m \overline{\gamma_{j'}}^{q},
\]
which can now be evaluated by a
type-I NUDFT with unit strengths, followed by a type-II
(each transform being double-sized with shifted frequency range).
The triangular weights $(n\!-\!|q|)$ show that the
denominator in \cref{wopt} is a {\em Fej\'er kernel}.

\item \textbf{CG adj.}
This applies CG to \cref{adj}, 
where the matrix \smash{$VV^*$} is, as above,
applied by a type-I and then type-II transform.
\smash{$x=V^*z$} is then returned.
In this case CG minimizes the 2-norm of
the error in $x$, as opposed to the residual,
and is also known as CGNE or Craig's method
\cite[\S8.3]{saadbook}.

\item \textbf{PCG adj sinc.}
  This uses the diagonal matrix $W=\diag\{w\}$
  with the optimal (``sinc$^2$'' or Fej\'er)
  weights \cref{wopt} as the preconditioner for CG on \cref{adj},
  as a variant of the previous method.
  This is discussed in \cite[Rmk.~1]{greengard2006fast}.
\end{enumerate}

\section{The ADI method for low rank approximations to Cauchy matrices} \label{sec:fadi_alg}
Simplifying our notation from~\eqref{eq:dispY}, consider a Cauchy matrix \smash{$C \in \mathbb{C}^{p \times q}$} that satisfies 
\begin{equation}
\label{eq:basicdisp}
 \Gamma C - C \Lambda = uv^*,
\end{equation}
with \smash{$u \in \mathbb{C}^{p \times 1}$}, \smash{$v \in \mathbb{C}^{q \times 1}$}, and  \smash{$\Gamma \in \mathbb{C}^{p \times p}$} and \smash{$\Lambda  \in \mathbb{C}^{q \times q}$} diagonal with spectral sets contained in disjoint arcs on the unit circle as in \Cref{lemma:arcs}.  After $k$ steps of the fADI algorithm, one produces \smash{$Z \in \mathbb{C}^{p \times k}$},  \smash{$W \in \mathbb{C}^{q \times k}$,} so that \smash{$C^{(k)} = ZW^*$} is an approximation to $C$. Pseudocode for fADI in this setting is given in \cref{alg:fadicauchy}. 

\begin{algorithm}[h!]
  \caption{factored ADI for the low rank approximation of a Cauchy-like matrix} \label{alg:fadicauchy}
  \begin{algorithmic}[1]
    \Procedure{factored ADI}{$C, \Gamma, \Lambda, u, v, \epsilon$} 
     \State  Use the bounds in \Cref{thm:cHSS_col} to determine $k$ such that $\|C - C^{(k)}\|_2 \leq \epsilon \|C\|_2$. 
    \State Compute ADI shift parameters $\{\alpha_j, \beta_j\}_{j = 1}^k$  using~\cite[Cor.~2] {wilber2021computing}.  
\State $Z_1 = (\beta_1-\alpha_1) (\Gamma - \beta_1 I)^{-1}u$
\State$W_1 = (\Lambda^*-\overline{\alpha}_1 I)^{-1}v$

    \For{$ j = 1$ to $k\!-\!1$}
	\State $Z_{j +1} = (\beta_{j+1}-\alpha_{j+1}) (\Gamma - \alpha_jI) (\Gamma-\beta_{j+1} I)^{-1} Z_j$
	\State $W_{j+1} = (\Lambda^* - \overline{\beta}_j I) (\Lambda^* - \overline{\alpha}_{j+1} I)^{-1} W_j$
    \EndFor
\State $Z = \left[ Z_1, Z_2, \cdots, Z_k \right]$
\State $W = \left[ W_1, W_2, \cdots, W_k \right]$
   \EndProcedure
 \end{algorithmic}
\end{algorithm}

Note that, since the matrices $\Gamma$ and $\Lambda$ are diagonal, the application of the shifted products and inverses to a column vector is just an entrywise product with column vectors. The optimal shift parameters can be computed at a trivial cost using elliptic integrals. Code for this can be found in our GitHub repository: \url{https://github.com/heatherw3521/NUDFT}. \Cref{alg:fadicauchy} can be used to directly construct the low rank factors of HODLR blocks in the NUDFT matrix. However, for  low rank approximations to the HSS rows and columns, the blocks are of a shape where either $q \gg p$ (HSS rows) or $p \gg q$ (HSS columns). Without loss of generality, assume it is the case that $q \gg p$. Then, it is preferable to avoid constructing $W$ altogether.  Instead, as in \cref{sec:constr-gener}, we construct $Z$ only and then apply a QR decomposition to \smash{$Z^*$}. From this, one can construct a one-sided interpolative decomposition as in~\cite{cheng2005compression} without ever sampling $C$ or applying operations with a cost that involves $q$. 

\vspace{2cm}

\bibliographystyle{siam}
\bibliography{refs}

\end{document}